\documentclass[oneside,reqno]{amsart}

\usepackage{amsmath,amssymb,amsthm}
\usepackage[width=0.7\paperwidth]{geometry}
\usepackage[
    colorlinks=true,
    linkcolor=red,
    citecolor=green,
    urlcolor=blue
]{hyperref}

\numberwithin{equation}{section}

\newtheorem{theorem}{Theorem}[section]
\newtheorem{proposition}[theorem]{Proposition}
\newtheorem{lemma}[theorem]{Lemma}

\theoremstyle{definition}
\newtheorem{definition}[theorem]{Definition}
\newtheorem{remark}[theorem]{Remark}

\newcommand{\R}{\mathbb R}
\newcommand{\D}{\mathbb D}
\newcommand{\T}{\mathbb T}

\newcommand{\Q}{\mathbb Q}

\newcommand{\E}{\mathbb E}
\newcommand{\Ent}{\mathcal H}
\newcommand{\Ren}{\mathcal D}
\newcommand{\FI}{\mathcal I}

\newcommand{\diag}{\Delta}
\newcommand{\dd}{\mathrm d}

\newcommand{\WN}{\mathcal W_N}
\newcommand{\EN}{\mathcal E_N}
\newcommand{\CN}{\mathcal C_N}
\newcommand{\QN}{\overline{\Q}_{N,\beta}}
\newcommand{\ZN}{\overline{Z}_{N,\beta}}

\begin{document}

\author[X Feng]{Xuanrui Feng}
\address{School of Mathematical Sciences, Peking University, Beijing 100871, China.}
\email{pkufengxuanrui@stu.pku.edu.cn}

\author[Z Wang]{Zhenfu Wang}
\address{Beijing International Center for Mathematical Research, Peking University, Beijing 100871, China}
\email{zwang@bicmr.pku.edu.cn}

\title[Mean-Field Limit for 3D Vlasov--Poisson--Fokker--Planck]
{Modulated Gibbs Measure and Mean-Field Limit for 3D Vlasov--Poisson--Fokker--Planck Equations}
\subjclass[2020]{35Q83, 35Q84, 82C22, 82C31}
\keywords{Vlasov--Poisson--Fokker--Planck equations, propagation of chaos, mean-field limit, modulated Gibbs measure}
\date{\today}

\begin{abstract}
    We introduce a modulated Gibbs measure for the usual tensorized initial data for stochastic Newton's systems with singular repulsive interactions. Using the uniform-in-$N$ partition-function estimates of Wang--Zhao \cite{wang2026uniform}, we show that the modulated Gibbs measure and its tensorized reference are $O(N^{-1})$-close in normalized relative entropy. A stability argument then transfers propagation of chaos from modulated Gibbs measures to tensorized data and other initial laws at the same entropy scale. Combined with the weighted BBGKY estimates of Bresch--Jabin--Soler \cite{bresch2025new}, this yields the first cutoff-free mean-field limit and propagation of chaos for the three-dimensional Vlasov--Poisson--Fokker--Planck (VPFP) equation on an $N$-independent short-time interval. 
\end{abstract}

\maketitle
\tableofcontents

\section{Introduction}

\subsection{Classical particle systems and mean-field equations}

We study the mean-field limit of $N$ particles in the phase space $\Omega=\D\times\R^d$, where $\D=\T^d$ or $\R^d$. Their positions $X_i$ and velocities $V_i$ evolve according to the  stochastic Newton's dynamics
\begin{equation}\label{eq:particle-SDE}
    \begin{cases}
        \dd X_i=V_i \dd t,\\
        \displaystyle \dd V_i= \bigg( \frac{1}{N} \sum_{j \neq i} K(X_i-X_j) \bigg) \dd t+ \sqrt{\frac{2}{\beta}} \dd B_t^i,  \quad i = 1, 2, \cdots, N. 
    \end{cases}
\end{equation}
Here $(B_{\cdot}^i)$ are $N$ independent standard $d$-dimensional Brownian motions, and $\beta>0$ is the inverse temperature. The interaction force has the form $K=-\nabla g$, where $g$ is a radial repulsive potential. The admissible class of kernels is specified below.

Let $z_i=(x_i,v_i)$ and $Z_N=(z_1, z_2, \cdots, z_N)$. By It\^o's formula, the $N$-particle density $f_N=f_N(t,Z_N)$ satisfies the forward Kolmogorov equation
\begin{equation}\label{eq:forward-kolmogorov}
    \partial_t f_N +\sum_{i=1}^N \bigg( v_i \cdot \nabla_{x_i} f_N +\frac{1}{N} \sum_{j \neq i} K(x_i-x_j) \cdot \nabla_{v_i} f_N \bigg) = \frac{1}{\beta} \sum_{i=1}^N \Delta_{v_i} f_N.
\end{equation}
We assume that the particles are indistinguishable, and thus the joint law $f_N$ is a symmetric probability measure on $\Omega^N$. Under some appropriate assumptions on the initial value $f_N^0$, as $N \to \infty$, the particle system \eqref{eq:particle-SDE} is expected to converge to its mean-field limit, which is well known as the Vlasov--Fokker--Planck equation
\begin{equation}\label{eq:vlasov}
    \partial_t f+v \cdot \nabla_x f +(K*\rho_f) \cdot \nabla_v f=\frac{1}{\beta} \Delta_v f,\quad \rho_f(x)=\int_{\R^d} f(x,v) \dd v.
\end{equation}
Denote by $f_0$ the initial data for \eqref{eq:vlasov}. In the classical mean-field theory, this convergence is described by the notion of propagation of chaos introduced by Kac \cite{kac1956foundations}. Denote by $f_{N,k}$ the $k$-particle marginal of the $N$-particle distribution $f_N$,  for any $1 \leq k \leq N$. We say that the sequence $(f_N)$ is $f$-chaotic if, for any fixed $k$, the following weak convergence holds:
\begin{equation}\label{eq:marginal-convergence}
    f_{N,k} \rightharpoonup f^{\otimes k} \text{ weakly in } \mathcal{P}(\Omega^k)
\end{equation}
as $N \to \infty$. Therefore, propagation of chaos is defined as follows: if initially $f_N^0$ is $f_0$-chaotic, then there exists $T>0$ such that for any time $t \in [0, T]$, the solution $f_N^t$ of \eqref{eq:forward-kolmogorov} is also $f_t$-chaotic, where $f_t$ is the solution of \eqref{eq:vlasov}. The sequence $(f_N)$ is $f$-chaotic if and only if the empirical measure
\begin{equation*}
    L_N=\frac{1}{N} \sum_{i=1}^N \delta_{(x_i,v_i)}
\end{equation*}
converges in law under $f_N$ to the deterministic measure $f$, i.e. 
\begin{equation}\label{eq:empirical-convergence}
    L_N \Rightarrow \delta_f \text{ in law in } \mathcal{P}(\Omega)
\end{equation}
as $N \to \infty$; see Sznitman \cite{sznitman1991topics}. Propagation of chaos for deterministic or stochastic classical particle systems, in particular when the kernel $K$ is singular, still remains a major challenge in mathematical kinetic theory. For the classical theory on mean-field limits and propagation of chaos, we refer to reviews such as \cite{hauray2014kac,golse2016dynamics,jabin2014review,jabin2017mean}. 

\subsection{Main results}

In the study of mean-field limits and propagation of chaos for classical particle systems such as \eqref{eq:particle-SDE}, one usually assumes $f_0$-chaotic initial data, or even fully tensorized data $f_N^0=f_0^{\otimes N}$, where $f_0$ is the initial datum for the mean-field equation, here the Vlasov--Fokker--Planck equation \eqref{eq:vlasov}. The main message of this article is that, for a large class of repulsive interaction kernels, it suffices to prove propagation of chaos for a suitably chosen initial law of the particle system, namely the modulated Gibbs measure associated with $f_0$.

Before presenting the exact definition of the modulated Gibbs measure, we clarify the range of admissible kernels considered throughout this article. We assume that
\begin{equation*}
    K=-\nabla g,
\end{equation*}
where the potential functions $g$ are of repulsive Riesz-type in the following form:
\begin{equation}\label{eq:admissible-potential}
    g_{d,s}=
    \begin{cases}
        -c_d \log |x|, \quad &s=0;\\
        c_{d,s} |x|^{-s}, &0<s<d/2
    \end{cases}
\end{equation}
for $\D=\R^d$, where $c_d, c_{d, s}$ are positive normalization constants. For $\D=\T^d$, the potentials shall be modified under a natural periodization and  we still use the same notations. This class includes the logarithmic potential in any dimension $d$ and the Coulomb potential when $d=3$.

Now we introduce the notion of the modulated Gibbs measure in the kinetic setting. For a spatial configuration $X^N = (x_1, \cdots, x_N)$, we define its empirical measure
\begin{equation*}
    \mu_N=\frac{1}{N} \sum_{i=1}^N \delta_{x_i}.
\end{equation*}
The modulated (potential) energy against the background probability measure $\rho \in \mathcal{P}(\mathbb{D})$, used by Duerinckx \cite{duerinckx2016mean} and Serfaty \cite{serfaty2020mean} in singular mean-field limits, is defined by
\begin{equation}\label{eq:modulated-energy}
    \WN(X_N;\rho)=\frac{1}{2} \iint_{\D^2 \setminus \diag} g(x-y) \dd (\mu_N-\rho)(x) \dd (\mu_N-\rho)(y),
\end{equation}
where $\diag$ is the diagonal of $\D^2$. The convergence $\WN(X_N;\rho)\to0$ provides a quantitative criterion for the weak convergence $\mu_N\rightharpoonup\rho$; see \cite[Proposition~3.6]{serfaty2020mean} for the Coulomb and super-Coulombic cases. We now define the modulated Gibbs measure as follows.

\begin{definition}[Modulated Gibbs measure]\label{def:modulated-gibbs}
    Given any bounded reference probability density $f$ on $\Omega$ (not necessarily a solution of \eqref{eq:vlasov}), let $\rho_f(x)=\int_{\R^d} f(x,v)\dd v$ denote its spatial marginal. The modulated Gibbs measure associated with $f^{\otimes N}$ with parameter $\beta$ is defined by
    \begin{equation*}
        \QN(f)= \frac{1}{\ZN} \exp \left( -\beta N \WN(X_N;\rho_f) \right) f^{\otimes N}(Z_N),
    \end{equation*}
    where $\WN(X_N;\rho_f)$ is the modulated energy. Here $\ZN$ is the partition function
    \begin{equation*}
        \ZN=\int_{\Omega^N} \exp \left( -\beta N \WN(X_N;\rho_f) \right) f^{\otimes N}(Z_N) \dd Z_N.
    \end{equation*}
\end{definition}

We prove that the modulated Gibbs measure is well defined and establish several useful properties in Section \ref{sec:modulated-gibbs-measure}. Now we state our first main result. Hereinafter $C_\beta$ denotes some positive constant that only depends on $\beta$ and may vary from line to line.

\begin{theorem}\label{thm:one-initial-data}
    Let $g=g_{d,s}$ in \eqref{eq:admissible-potential}. Let $f_0 \in L^1 \cap L^\infty(\Omega)$ be any bounded probability density function on $\Omega$ with its spatial density $\rho_{f_0} \in L^\infty(\D)$. If $s=0$, we further assume that there exist $a,b>0$ such that
    \begin{equation}\label{eq:exponential-integrability}
        M_{a,b}= \int_\Omega e^{a |x|^b} f_0(z) \dd z<\infty.
    \end{equation}
    Let $f_t$ be the unique smooth solution to \eqref{eq:vlasov} with initial value $f_0$. If the mean-field convergence \eqref{eq:marginal-convergence} holds for any entropy solution $g_N$ of \eqref{eq:forward-kolmogorov} with initial value $g_N^0=\QN(f_0)$ given by the modulated Gibbs measure associated to $f_0$ for any $t \in [0,T]$, then it also holds for any entropy solution $f_N$ of \eqref{eq:forward-kolmogorov} with any initial value $f_N^0$ that is $f_0$-chaotic in the sense of relative entropy (see Definition \ref{def:normalized-relative-entropy}):
    \begin{equation}\label{eq:initial-entropy-bound}
        \Ent_N (f_N^0|f_0^{\otimes N}) \leq \frac{C_\beta}{N}
    \end{equation}
    for any $t \in [0,T]$. In particular, it applies to $f_N$ with fully tensorized initial value $f_N^0=f_0^{\otimes N}$.
\end{theorem}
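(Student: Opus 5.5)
The argument has three steps: show that the modulated Gibbs measure is entropically close to $f_0^{\otimes N}$, conclude that $f_N^0$ is entropically close to $g_N^0=\QN(f_0)$, and then propagate this closeness in time to transfer chaos from $g_N$ to $f_N$.

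\textbf{Step 1: entropy closeness of the modulated Gibbs measure.} Write $\Ent_N(\mu|\nu)=\frac1N\int\mu\log(\mu/\nu)$. By definition,
\[
\Ent_N(\QN(f_0)\,|\,f_0^{\otimes N}) = -\beta\,\E_{\QN}[\WN(X_N;\rho_{f_0})] - \tfrac1N\log\ZN .
\]
We will use the Wang--Zhao uniform-in-$N$ partition function estimates from Section~\ref{sec:modulated-gibbs-measure}. These give $|\log\ZN|\le C_\beta$, and they also give a lower bound $\WN\ge -C\frac{\log N}{N}$. For $s>0$ this bound is actually $\ge -C/N$ after the standard renormalization. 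The exponential integrability assumption \eqref{eq:exponential-integrability} is what controls the log case $s=0$ on $\R^d$. Combining these yields
\[
\Ent_N(\QN(f_0)|f_0^{\otimes N})\le C_\beta/N .
\]
If the lower bound on $\WN$ only gives a $\log N/N$ error, we instead apply the Gibbs variational principle. Since $\QN$ minimizes $\Ent_N(\cdot|f_0^{\otimes N})+\beta\E[\WN]$, the quantity $-\frac1N\log\ZN$ equals that minimum. The estimate $|\log \ZN|\le C_\beta$ then gives the bound directly, provided $\E_{\QN}[\WN]\ge -C/N$.

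\textbf{Step 2: closeness of the two initial laws.} Next we want $\Ent_N(f_N^0|g_N^0)\le C_\beta/N$. Expanding,
\[
\Ent_N(f_N^0|\QN)=\Ent_N(f_N^0|f_0^{\otimes N})+\beta\E_{f_N^0}[\WN]+\tfrac1N\log\ZN .
\]
So we must bound $\E_{f_N^0}[\WN]$ by $C/N$. We do this with the entropy inequality
\[
\E_{\mu}[\beta\WN]\le \Ent_N(\mu|f_0^{\otimes N})+\tfrac1N\log\E_{f_0^{\otimes N}}\big[e^{N\beta\WN}\big].
\]
The last term is a partition function at the negative temperature $-\beta$ (or $-2\beta$). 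The Wang--Zhao estimates also control this term uniformly by $C_\beta/N$ for small parameter, since $\WN$ is a degenerate U-statistic under $f_0^{\otimes N}$. This step, bounding the positive exponential moment of $N\WN$ under the tensorized law for singular $g$, is the main obstacle. If necessary, I would first smooth $g$ at scale $N^{-1/d}$ and absorb the remainder using the lower bound on the modulated energy.

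\textbf{Step 3: propagation of relative entropy.} For two entropy solutions of the same linear Kolmogorov equation \eqref{eq:forward-kolmogorov}, relative entropy is nonincreasing in time (data processing for the Markov semigroup):
\[
\Ent_N(f_N^t|g_N^t)\le\Ent_N(f_N^0|g_N^0)\le C_\beta/N .
\]
For entropy solutions this is justified by regularization, or by the superposition/SDE-law representation. Chaos then transfers from $g_N$ to $f_N$. For fixed $k$, subadditivity of entropy for symmetric laws gives
\[
H(f_{N,k}^t|g_{N,k}^t)\le \tfrac{k}{\lfloor N/k\rfloor}\,N\,\Ent_N(f_N^t|g_N^t)\cdot\tfrac1N\cdot N\lesssim kC_\beta/N\to0 .
\]
Pinsker's inequality then gives $\|f^t_{N,k}-g^t_{N,k}\|_{TV}\to0$. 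Since $g^t_{N,k}\rightharpoonup f_t^{\otimes k}$ by hypothesis, we get $f_{N,k}^t\rightharpoonup f_t^{\otimes k}$ for every $t\in[0,T]$. The tensorized case $f_N^0=f_0^{\otimes N}$ satisfies \eqref{eq:initial-entropy-bound} trivially, since its relative entropy to $f_0^{\otimes N}$ is zero.
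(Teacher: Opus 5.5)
The gap is in Step 2, and smoothing cannot close it, because the bound $\Ent_N(f_N^0|\QN)\le C_\beta/N$ you aim for is false in general. For $s>0$ the moment $\E_{f_0^{\otimes N}}[e^{\eta N\WN}]$ is $+\infty$ for every $\eta>0$ and $N\ge2$. Indeed, $N\WN$ contains $\frac1N g(x_1-x_2)=\frac{c_{d,s}}{N}|x_1-x_2|^{-s}$, the remaining terms are bounded below for fixed $N$, and $e^{a|x|^{-s}}$ is not locally integrable. Wang--Zhao only controls the repulsive moments $\E_{f_0^{\otimes N}}[e^{-\beta N\WN}]$. Smoothing $g$ at scale $N^{-1/d}$ does not help either, since the discarded part enters the exponent with a positive sign and is exactly what blows up.

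A counterexample shows the target bound itself fails. Take $\epsilon_N=1/\log N$ and let $\nu_N$ be $f_0^{\otimes N}$ conditioned on $\{|x_1-x_2|<N^{-2/s}\}$. Set $f_N^0=(1-\epsilon_N)f_0^{\otimes N}+\epsilon_N\,\mathrm{Sym}\,\nu_N$, which is bounded and symmetric. By convexity, $N\Ent_N(f_N^0|f_0^{\otimes N})\le \epsilon_N\log\bigl(1/f_0^{\otimes N}(|x_1-x_2|<N^{-2/s})\bigr)=O(1)$. Yet $N\,\E_{f_N^0}[\WN]\ge c\,\epsilon_N N^{-1}N^{2}-C=cN/\log N-C$, so your own expansion gives $N\Ent_N(f_N^0|\QN)\to\infty$. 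Separately, your Step 1 bounds the opposite direction $\Ent_N(\QN|f_0^{\otimes N})$, which your argument never uses. Its claim $\WN\ge -C/N$ for $s>0$ is also not right: the known pointwise lower bounds are of order $-N^{s/d-1}$.

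The missing idea is to never compose the two entropy bounds, and instead to transfer chaos twice through the intermediate law $f_0^{\otimes N}$. With $f_N^0=f_0^{\otimes N}$, your expansion needs no exponential moment: $\E_{f_0^{\otimes N}}\WN=-\frac{1}{2N}\int g*\rho_{f_0}\,\rho_{f_0}$ exactly, so $\Ent_N(f_0^{\otimes N}|\QN)\le C_\beta/N$ by the upper bound on $\ZN$. Stability plus chaos transfer then make every entropy solution from $f_0^{\otimes N}$ chaotic. A second application, with $f_0^{\otimes N}$ as the reference datum, uses the hypothesis $\Ent_N(f_N^0|f_0^{\otimes N})\le C_\beta/N$ directly.

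Step 3 needs two repairs. First, entropy solutions are not unique, so monotonicity of $\Ent_N(f_N|g_N)$ cannot be asserted for an arbitrary pair. You must build the reference solution as a limit of regularized solutions with the same $K_\varepsilon$ that produces $f_N$, and then use joint lower semicontinuity. This suffices because the hypothesis covers every entropy solution from the reference datum.

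Second, the subadditivity $H(f_{N,k}|g_{N,k})\le \lfloor N/k\rfloor^{-1}H(f_N|g_N)$ (with $H$ the unnormalized relative entropy) requires a product reference measure, and $g_N^t$ is not a product. Data processing only gives $H(f^t_{N,k}|g^t_{N,k})\le C_\beta$, which does not make the total variation vanish. Work with the empirical measure instead. For $A=\{\mathbf d(L_N,f_t)>\varepsilon\}$, the Donsker--Varadhan inequality with $\eta=\log(1+1/g_N(A))$ gives $f_N(A)\le \bigl(N\Ent_N(f_N|g_N)+\log 2\bigr)/\log(1+1/g_N(A))\to0$, and Sznitman's equivalence concludes.
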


We refer to \cite[Subsection 2.4]{bresch2025new} for the explicit notion of entropy solutions. Roughly speaking, an entropy solution is a weak solution that dissipates the entropy. 

Theorem \ref{thm:one-initial-data} provides a general guiding principle for proving propagation of chaos and (singular) mean-field limits by identifying the modulated Gibbs measure $\QN(f_0)$ as a canonical initial law.  As shown in Section \ref{sec:modulated-gibbs-measure}, 
it satisfies the relative entropy bound \eqref{eq:initial-entropy-bound} and hence the usual $f_0$-chaos condition, making it a natural initial law to include in propagation of chaos results. Conversely, proving
propagation of chaos for every entropy solution starting from $\QN(f_0)$ suffices to establish it for all initial laws satisfying the same bound, including the tensorized law $f_0^{\otimes N}$.

The modulated Gibbs measure corresponds to a non-i.i.d. sampling of the initial configuration. When the potential $g$ is singular, near-collision configurations in this case are less likely than under tensorized sampling, since the density function is tilted by a modulated Gibbs weight. This observation motivates the present work, where the suppression of near-collisions by the Gibbs weight can facilitate the proof of propagation of chaos. In particular, combining Theorem \ref{thm:one-initial-data} with the estimates in Bresch--Jabin--Soler \cite{bresch2025new}, we establish the complete mean-field limit and propagation of chaos for the 3D Vlasov--Poisson--Fokker--Planck (VPFP) equation as follows.  

\begin{theorem}[Mean-field limit for 3D VPFP]\label{thm:mfl-vpfp}
    Let $g=g_{d,s}$ in \eqref{eq:admissible-potential} with $0<s<d/2$. Assume that $f_0 \in L^1 \cap L^\infty(\Omega)$ with $\rho_{f_0} \in L^\infty(\D)$ and that there exists $\alpha>0$ such that
    \begin{equation*}
        M_\alpha=
        \begin{cases}
            \displaystyle \int_\Omega e^{\alpha |v|^2} f_0(z) \dd z, \quad &\, \mbox{when} \, \, \D=\T^d\\
            \displaystyle \int_\Omega e^{\alpha |z|^2} f_0(z) \dd z, \quad &\,  \mbox{when} \,\,  \D=\R^d
        \end{cases}
    \end{equation*}
    is finite. Assume in addition that $f_0 \in C^\infty(\Omega)$ and that the Vlasov--Fokker--Planck equation \eqref{eq:vlasov} admits a unique smooth solution $f$ on $[0,T]$ with initial datum $f_0$ for some $T>0$. Then for any entropy solution $f_N$ of \eqref{eq:forward-kolmogorov} with any initial value $f_N^0$ satisfying
    \begin{equation*}
        \Ent_N (f_N^0|f_0^{\otimes N}) \leq \frac{M_\beta}{N}
    \end{equation*}
    for some constant $M_\beta>0$, there exists some $T^\ast \in (0,T]$ that depends only on $\beta, \|f_0\|_{L^\infty}, \|\rho_{f_0}\|_{L^\infty}$ and $M_\alpha, M_\beta$ such that 
    \begin{equation*}
        f_{N,k} \rightharpoonup f^{\otimes k} \text{ weakly in } \mathcal{P}(\Omega^k)
    \end{equation*}
    holds for any $t \in [0,T^\ast]$. In particular, taking $d=3$ and $s=d-2=1$, we have the mean-field limit for the 3D Vlasov--Poisson--Fokker--Planck equation on $[0,T^\ast]$.
\end{theorem}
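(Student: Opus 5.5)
The proof combines Theorem \ref{thm:one-initial-data} with the weighted BBGKY hierarchy estimates of Bresch--Jabin--Soler \cite{bresch2025new}. By Theorem \ref{thm:one-initial-data}, it suffices to prove propagation of chaos on some $[0,T^\ast]$ for every entropy solution $g_N$ starting from $g_N^0=\QN(f_0)$. The transfer to a general $f_N^0$ with $\Ent_N(f_N^0|f_0^{\otimes N})\le M_\beta/N$ then follows from the stability argument behind Theorem \ref{thm:one-initial-data}. The hypotheses needed there are available: $s>0$, so no exponential spatial moment is required, and on $\R^d$ the Gaussian moment $M_\alpha$ implies \eqref{eq:exponential-integrability} in any case. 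I must check that the $T^\ast$ produced for the Gibbs datum depends only on the listed quantities, so that the transfer is uniform.

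The main step is to verify that $\QN(f_0)$ satisfies the structural initial assumptions of \cite{bresch2025new}. Their framework needs, roughly, four inputs. First, the normalized relative entropy to $f_0^{\otimes N}$ is $O(N^{-1})$; this is the result of Section \ref{sec:modulated-gibbs-measure} obtained from the Wang--Zhao partition function bounds. Second, uniform Gaussian moment bounds hold for the marginals. Third, there is an $L^\infty$-type control of the marginals relative to $f_0^{\otimes k}$. Fourth, the initial datum carries a Gibbs-type weight that controls the singular interaction energy, so that $\E\,e^{\beta N\WN}$-type quantities, or the weighted norms of \cite{bresch2025new}, are finite uniformly in $N$.

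I would obtain these from the explicit density. Writing
\[
\frac{\QN(f_0)}{f_0^{\otimes N}}=\ZN^{-1}\exp\!\left(-\beta N\WN(X_N;\rho_{f_0})\right),
\]
the lower bound $\WN\ge -C\,(1+\log N)/N$, or its Riesz analogue, together with the upper bound $\ZN^{-1}\le e^{C_\beta}$ from \cite{wang2026uniform}, bounds this ratio uniformly. The marginal and moment bounds then transfer directly from $f_0^{\otimes k}$: the weight tilts only positions, and velocities are left untouched. The weighted quantities are controlled because the modulated Gibbs weight exactly cancels the singular part of the pair energy. In this sense the initial law is ``Gibbs-modulated'' by construction, which is precisely the property that tensorized data lack.

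Given these inputs, the BBGKY-weighted estimates of \cite{bresch2025new} propagate the bounds on the marginals for a short time $T^\ast$ that depends only on $\beta$, $\|f_0\|_{L^\infty}$, $\|\rho_{f_0}\|_{L^\infty}$, $M_\alpha$ and the entropy constant. On that interval they yield $f_{N,k}\rightharpoonup f^{\otimes k}$, with $T^\ast\le T$ so that the smooth limit exists. The case $d=3$, $s=1$ satisfies $s<d/2$ and gives the VPFP statement.

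The expected obstacle is matching the precise hypotheses of \cite{bresch2025new}. Their weights are likely of the form $\exp(\pm\beta\sum g(x_i-x_j)/N)$ combined with velocity Gaussians, and these must be bounded under $\QN(f_0)$ with constants independent of $N$. I would attack this by rewriting the unmodulated pair energy as $N\WN$ plus linear and constant terms in $g*\rho_{f_0}$. These extra terms are bounded because $\rho_{f_0}\in L^\infty$ and $s<d/2$, and the remaining factor is then absorbed by the uniform partition function bound of \cite{wang2026uniform}.
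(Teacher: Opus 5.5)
Your reduction through Theorem~\ref{thm:one-initial-data} and your overall plan agree with the paper. The plan is to check the hypotheses of \cite[Proposition~5]{bresch2025new} for $g_N^0=\QN(f_0)$ using the Wang--Zhao bounds. However, the step meant to carry out that check fails.

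You claim that a lower bound on $\WN$, together with $\ZN^{-1}\le e^{C_\beta}$, bounds $\QN(f_0)/f_0^{\otimes N}=\ZN^{-1}e^{-\beta N\WN}$ uniformly in $N$. For Riesz potentials the available bound is $\WN\ge -CN^{s/d-1}$. This only gives $e^{-\beta N\WN}\le e^{C\beta N^{s/d}}$ (and $N^{C\beta}$ in the log case), which is not uniform in $N$. The growth is genuine: for Coulomb interactions $\WN$ is of order $-N^{s/d-1}$ on lattice-like configurations. So the marginal and moment bounds do not ``transfer directly from $f_0^{\otimes k}$''.

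Moreover, the condition to verify is not a moment or $L^\infty$-ratio bound. It is \eqref{eq:bjs-condition}: $\int|g^0_{N,k}|^q e^{\lambda(0)e_k}\,\dd Z_k\le F_0^k$ for every $k$, uniformly in $N$, together with $\sup_{t\le1}\int|g_N|^q e^{\lambda(t)e_N}\le F^N$. Here $e_k$ contains $+\frac1N\sum_{i\ne j\le k}g(x_i-x_j)$, so the weighted norm is in general infinite for $f_0^{\otimes N}$, since $e^{c|x|^{-s}}$ is not locally integrable. The Gibbs weight does not ``exactly cancel'' this singular energy. The $q$-th power turns the tilt into one at inverse temperature $q\beta$, and it dominates the singular energy only when $\lambda$ is small compared with $q\beta$.

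Your last paragraph writes the full pair energy as $2N\WN$ plus terms linear in $g*\rho_{f_0}$. That handles only $k=N$ and yields a bound $C^N$, which cannot be localized to $F_0^k$ for fixed $k$ uniformly in $N$. The missing idea is the subsystem decomposition used in the paper. First, Jensen's inequality in $z_{k+1},\dots,z_N$ gives
\[
\int|g^0_{N,k}|^q e^{\lambda(0)e_k}\,\dd Z_k\le \frac{\|f_0\|_{L^\infty}^{(q-1)k}}{\ZN^q}\int e^{-q\beta N\WN(X_N;\rho_{f_0})}f_0^{\otimes N}e^{\Lambda^{-1}e_k}\,\dd Z_N .
\]
Next one splits $N\WN=\frac kN\,k\mathcal W_k(X_k)+\frac{N-k}N(N-k)\mathcal W_{N-k}(X_{[k+1,N]})+(\text{cross term})$. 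The cross term is bounded below by $-Ck$ because $g\ge -C$ and $g*\rho_{f_0}$ is bounded. After this the integral factorizes into two blocks:
\begin{itemize}
\item The $(N-k)$-block is a partition function at inverse temperature $q\beta(N-k)/N$, bounded by Proposition~\ref{prop:partition-function}.
\item In the $k$-block, $-\frac{q\beta}{2N}\sum_{i\ne j\le k}g$ dominates $\frac{\Lambda^{-1}}{N}\sum_{i\ne j\le k}g$ once $\Lambda^{-1}$ is small. What remains is $\int f_0^{\otimes k}e^{\Lambda^{-1}\sum_{i\le k}(1+|v_i|^2)}\,\dd Z_k\le C^k$ by $M_\alpha$ (with $|z_i|^2$ in place of $|v_i|^2$ on $\R^d$).
\end{itemize}
The $N$-particle condition then follows by taking $k=N$ and propagating with \cite[Lemma~9]{bresch2025new}. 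Without this decomposition your argument does not establish the hypotheses of Proposition~\ref{prop:bjs-estimate}, so the proof as proposed has a genuine gap at its central step.
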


To the best of our knowledge, Theorem \ref{thm:mfl-vpfp} gives the first propagation of chaos result for the 3D Vlasov--Poisson--Fokker--Planck equation on a fixed time interval. 

\begin{remark}[First-order systems]
    The reduction to modulated Gibbs initial data in Theorem \ref{thm:one-initial-data} also applies to the first-order system
    \begin{equation*}
        \dd X_i=\frac{1}{N} \sum_{j \neq i} K(X_i-X_j) \dd t+\sqrt{\frac{2}{\beta}} \dd B_t^i,
        \quad i=1, \cdots, N.
    \end{equation*}
    For a reference probability density $\rho$ on $\D$, the corresponding modulated Gibbs measure takes the form
    \begin{equation*}
        \QN(\rho)=\frac{1}{\ZN} \exp \left( -\beta N\WN(X_N;\rho) \right) \rho^{\otimes N}(X_N),
    \end{equation*}
    where $\WN(X_N;\rho)$ is given by \eqref{eq:modulated-energy} and $\ZN$ is the normalization constant. This measure has appeared in the first-order setting in \cite{bresch2019modulated,bresch2019mean,rosenzweig2025modulated}. The estimates in Section \ref{sec:modulated-gibbs-measure} and the proof of Theorem \ref{thm:one-initial-data} in Section \ref{sec:proof-main-results} extend to this setting with the corresponding modifications. We focus on the application in Theorem \ref{thm:mfl-vpfp} and omit the details of the first-order extension.
\end{remark}

\begin{remark}[Log and Bessel--Riesz potentials]\label{rmk:log-bessel-riesz}
    Theorem \ref{thm:mfl-vpfp} also applies to log potentials on the torus under the extra assumption of \eqref{eq:exponential-integrability}, and to a wider class of Bessel--Riesz kernels defined by
    \begin{equation*}
        \widehat{g_{d,s}^{(m)}}(\xi)=(m^2+|\xi|^2)^{-(d-s)/2}
    \end{equation*}
    for $\D=\R^d$, for any $0<s<d/2$ and any $m \geq 0$ without extra assumptions. Here $m=0$ corresponds to \eqref{eq:admissible-potential}. For $\D=\T^d$ the potentials shall be modified under a natural periodization and will still be denoted by the same notations. The partition function estimate holds as in Proposition \ref{prop:partition-function}, and all the other arguments follow identically. We take the Riesz potentials as typical examples in this article.
\end{remark}

We briefly sketch the proof strategy of our main results. First key oberservation that leads to Theorem \ref{thm:one-initial-data} is a stability-type estimate of entropy solutions of \eqref{eq:forward-kolmogorov}. In general one may not expect stability of entropy solutions since they are not uniquely determined by the initial data. Nevertheless, given two nearby initial data and any entropy solution starting from one, we can construct an entropy solution starting from the other that remains close to the prescribed solution. This is our Proposition \ref{prop:stability-type-estimates}. Then we transfer Kac's chaos from one solution to the other via a relative-entropy inequality, see Lemma \ref{lem:weak-convergence}. The other key technical estimate follows from the nice properties of the modulated Gibbs measure, which are gathered in Section \ref{sec:modulated-gibbs-measure} and essentially guaranteed by a uniform-in-$N$ partition function estimate for any $\beta>0$.

With Theorem \ref{thm:one-initial-data} at hand, it suffices to prove Theorem \ref{thm:mfl-vpfp} for the entropy solution $g_N$. Here we use the estimates of Bresch--Jabin--Soler \cite{bresch2025new}, where a compactness-uniqueness method is applied to prove the mean-field limit for the particle system \eqref{eq:particle-SDE}. The central idea is to obtain compactness of entropy solutions to \eqref{eq:forward-kolmogorov} under an exponentially weighted $L^q$ norm. They resolve the 2D Vlasov--Poisson--Fokker--Planck model together with a partial result for the 3D case. More explicitly, the main estimates of the BBGKY hierarchy hold in any dimension, but the weighted norm is not in general finite in 3D, even for tensorized initial data $f_0^{\otimes N}$ with bounded $f_0$. But for modulated Gibbs initial data $g_N^0=\QN(f_0)$, the negative singular term in $\WN(X_N;\rho_f)$ dominates the positive singular term in the exponent of the weight, and thus all estimates from \cite{bresch2025new} now work for $g_N$. 
 
\subsection{Related works}

We briefly review related works on the mean-field derivation of Vlasov-type equations from Newtonian particle systems such as \eqref{eq:particle-SDE}, in both stochastic and deterministic settings.

For Lipschitz interactions, classical coupling and stability arguments yield mean-field limits and propagation of chaos; see for instance McKean \cite{mckean1967propagation}, Dobrushin \cite{dobrushin1979vlasov} and Sznitman \cite{sznitman1991topics}. For deterministic and stochastic Vlasov systems with bounded forces, Jabin--Wang \cite{jabin2016mean} proved propagation of chaos by a new relative entropy method, assuming sufficient regularity of the limiting solutions. Carrillo--Choi--Hauray--Salem \cite{carrillo2019mean} proved quantitative mean-field convergence for deterministic Cucker--Smale models with bounded, discontinuous communication weights arising from sharp sensitivity regions, using weak-strong stability in the $1$-Wasserstein distance. For diffusive systems, Lacker \cite{lacker2023hierarchies} obtained optimal local relative entropy rates through an entropy hierarchy under transport-entropy assumptions, and this framework extends to kinetic systems.

For singular interactions with velocity diffusion, Hauray--Salem \cite{hauray2019propagation} proved quantitative propagation of chaos for the one-dimensional Coulomb force, both repulsive and attractive. Bresch--Jabin--Soler \cite[Theorem~2 and Proposition~5]{bresch2025new} developed weighted BBGKY estimates for repulsive forces $K=-\nabla g \in L^p(\T^d)$ for any $p>1$. Under exponential integrability of $g$, their method yields short-time convergence and covers the two-dimensional VPFP equation. 

A complementary approach uses the dual hierarchy. Bresch--Duerinckx--Jabin \cite{bresch2024duality} established propagation of chaos for odd forces $K\in L^2_{\mathrm{loc}}$, with or without velocity diffusion and without assuming a potential structure. An additional Sobolev regularity on $K$ also yields convergence rates. Khoury--Jabin \cite{khoury2026quantitative} further obtained short-time quantitative estimates for marginals and higher-order correlations under additional regularity assumptions. Duerinckx--Jabin \cite{duerinckx2026derivation} extended the force class to $L^{2-\eta}_{\mathrm{loc}}$ for sufficiently small $\eta>0$, using hypoelliptic estimates with diffusion and velocity averaging without diffusion. Their result includes the unregularized two-dimensional Vlasov--Poisson equation assuming that the limiting solution has enough required regularity.

For deterministic singular systems, Hauray--Jabin \cite{hauray2007n,hauray2015particle} established mean-field limits for forces of order $|x|^{-\alpha}$ with $\alpha<1$, including quantitative propagation of chaos in $d \geq 3$. Lazarovici--Pickl \cite{lazarovici2017mean} treated the three-dimensional Coulomb force with an $N$-dependent cutoff. Duerinckx--Jabin \cite{duerinckx2026singular} derived the linearized Vlasov dynamics for fluctuations around Gibbs equilibrium at sufficiently high temperatures, including three-dimensional repulsive Coulomb interactions.

For the three-dimensional VPFP equation with an $N$-dependent cutoff, quantitative limits were obtained by Carrillo--Choi--Salem \cite{carrillo2019propagation} and Huang--Liu--Pickl \cite{huang2020mean}. Chen--Jung--Pickl--Wang \cite{chen2026mean} combined trajectory estimates with relative entropy to obtain strong convergence of the marginals. 

Our main result  Theorem \ref{thm:mfl-vpfp} establishes the mean-field limit for the unregularized repulsive Coulomb force on an $N$-independent short-time interval, for tensorized initial data and the larger class \eqref{eq:initial-entropy-bound}. The key step is to satisfy the weighted BBGKY bounds with modulated Gibbs initial data and then transfer convergence to this class through Theorem \ref{thm:one-initial-data}. The technique of exponentially tilting the initial data may extend to other singular mean-field limits. The mean-field derivation of the VPFP equation over long time scales and the cutoff-free mean-field derivation of the three-dimensional Vlasov--Poisson equation remain open.

\section{Modulated Gibbs measure}\label{sec:modulated-gibbs-measure}

In this section, we establish several properties of the modulated Gibbs measure.

\subsection{Uniform partition-function estimates}

This subsection is devoted to a key uniform-in-$N$ estimate for the partition function $\ZN$ in Definition \ref{def:modulated-gibbs}, which we recall
\begin{equation*}
    \begin{aligned}
        \ZN=&\, \int_{\Omega^N} \exp \left( -\beta N \WN(X_N;\rho_f) \right) f^{\otimes N} (Z_N) \dd Z_N\\
        =&\, \int_{\D^N} \exp \left( -\beta N \WN(X_N;\rho_f) \right) \rho_f^{\otimes N} (X_N) \dd X_N.
    \end{aligned}
\end{equation*} 

For any bounded reference measure $f$ and any admissible potential $g$, we have the following uniform-in-$N$ partition function estimate for any positive $\beta$.

\begin{proposition}[Uniform-in-$N$ partition-function estimate]\label{prop:partition-function}
    Let $\beta>0$ and let $g=g_{d,s}$ in \eqref{eq:admissible-potential}. Assume that $f \in L^1 \cap L^\infty(\Omega)$ with  $\rho_f \in L^\infty(\D)$, together with the extra assumption \eqref{eq:exponential-integrability} if $s=0$. Then we have the following uniform-in-$N$ partition function estimate:
    \begin{equation*}
        \frac{1}{C_\beta}  \leq \ZN \leq C_\beta,
    \end{equation*}
    where $C_\beta$ depends only on $\beta$ and $\|\rho_f\|_{L^\infty(\D)}$, and also on $M_{a,b}$ if $s=0$.
\end{proposition}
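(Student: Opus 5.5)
The partition function only involves the spatial marginal, $\ZN=\int_{\D^N} e^{-\beta N\WN(X_N;\rho)}\rho^{\otimes N}(X_N)\,\dd X_N$ with $\rho=\rho_f$. My plan is to prove the two bounds by different mechanisms: Jensen's inequality for the lower bound, and the uniform-in-$N$ estimates of Wang--Zhao \cite{wang2026uniform} for the upper bound.

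\textbf{Lower bound.} Since $\rho^{\otimes N}$ is a probability measure, Jensen's inequality gives
\begin{equation*}
\ZN\ge \exp\Big(-\beta N\,\E_{\rho^{\otimes N}}[\WN(X_N;\rho)]\Big).
\end{equation*}
Expanding $(\mu_N-\rho)^{\otimes 2}$ off the diagonal, with the diagonal excluded only from the $\mu_N\otimes\mu_N$ part, gives
\begin{equation*}
\WN=\frac{1}{2N^2}\sum_{i\neq j}g(x_i-x_j)-\frac1N\sum_i g*\rho(x_i)+\frac12\iint g\,\dd\rho\,\dd\rho .
\end{equation*}
Under i.i.d.\ sampling the expectation is
\begin{equation*}
\frac{N(N-1)}{2N^2}\iint g\,\dd\rho^{\otimes2}-\iint g\,\dd\rho^{\otimes 2}+\frac12\iint g\,\dd\rho^{\otimes2}=-\frac{1}{2N}\iint g(x-y)\rho(x)\rho(y)\,\dd x\,\dd y.
\end{equation*}
Hence $\ZN\ge \exp\big(\tfrac{\beta}{2}\iint g\,\dd\rho\,\dd\rho\big)$. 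This is bounded below in terms of $\|\rho\|_{L^\infty}$, because $g\in L^1_{\rm loc}$ for $s<d$.

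For $s>0$ on $\R^d$ we have $g\ge0$, so this energy is nonnegative and the bound is at least $1$. On $\T^d$, or for the logarithm, $g$ may be negative at large distances. For the logarithm on $\R^d$, the negative part $-\log|x-y|\ge -\log(1+|x|)-\log(1+|y|)$ is controlled by the moment $M_{a,b}$ in \eqref{eq:exponential-integrability}.

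\textbf{Upper bound.} This is the main obstacle, since one needs $\E_{\rho^{\otimes N}}[e^{-\beta N\WN}]\le C_\beta$ uniformly in $N$ for every $\beta>0$. A crude bound fails: $N\WN$ is bounded below only by $-CN\cdot(\text{self-energy correction})$, which grows like $N^{s/d}$ or like $\log N$ for $s=0$.

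I would invoke Wang--Zhao \cite{wang2026uniform} directly. Their result gives exactly this uniform-in-$N$ bound for the modulated free energy $\log\E_{\rho^{\otimes N}}e^{-\beta N\WN(\cdot;\rho)}$ with $\rho\in L^\infty$, for Riesz potentials with $0\le s<d/2$ and any $\beta>0$. The restriction $s<d/2$ is what lets the fluctuation $\sum_{i\ne j}\tilde g(x_i,x_j)$ of the centered kernel have a square-integrable kernel, $g\in L^2_{\rm loc}$. Their estimate is then a Gaussian-chaos type bound for a degenerate U-statistic.

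If I had to reproduce it, I would proceed in three steps:
\begin{enumerate}
\item Write $N\WN=\frac1{2N}\sum_{i\ne j}\tilde g(x_i,x_j)-\frac12\iint g\,\dd\rho\,\dd\rho$, where
\begin{equation*}
\tilde g(x,y)=g(x-y)-g*\rho(x)-g*\rho(y)+\iint g\,\dd\rho\,\dd\rho
\end{equation*}
is the doubly centered kernel.
\item Use the positive-definiteness of $g$ ($\hat g>0$) to write $e^{-\frac{\beta}{2N}\sum\tilde g}$ via a Hubbard--Stratonovich Gaussian field.
\item Control the resulting expectation by a cumulant expansion, using $\|\tilde g\|_{L^2(\rho\otimes\rho)}<\infty$ since $s<d/2$.
\end{enumerate}
The constant then depends on $\beta$, $\|\rho\|_\infty$, and on $M_{a,b}$ in the log case, which is needed to make $\tilde g\in L^2(\rho\otimes\rho)$ on $\R^d$.
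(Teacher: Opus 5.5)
Your proposal follows the paper's proof: the lower bound comes from Jensen's inequality with $\E_{\rho_f^{\otimes N}}\WN(X_N;\rho_f)=-\frac{1}{2N}\iint g(x-y)\rho_f(x)\rho_f(y)\,\dd x\,\dd y$, and the upper bound is a direct appeal to Wang--Zhao \cite{wang2026uniform}. You also spell out why $\iint g\,\rho_f\rho_f$ is bounded below on $\T^d$, and via $M_{a,b}$ for the logarithm, which the paper leaves implicit. Your three-step outline for reproducing Wang--Zhao is only heuristic (for instance, the excluded diagonal, with $g(0)=+\infty$, makes a naive Hubbard--Stratonovich representation ill-defined), but it is not needed, since the paper likewise just cites that result.
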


\begin{proof}
    The lower bound follows from Jensen's inequality. For the upper bound, we refer to the recent work of Wang--Zhao \cite[Theorem 1.1]{wang2026uniform} applied to $\rho_f$.  When $\D=\R^d$ the result follows directly from the cited work, and for $\D=\T^d$ it also holds under a natural periodization.
\end{proof}

For related works on uniform partition-function estimates, we also refer to Jabin--Wang \cite{jabin2018quantitative} for bounded potentials, Delgadino--Gvalani \cite{delgadino2025sharp} for repulsive log potentials, Duerinckx--Jabin \cite{duerinckx2026singular} for small $\beta$ and the very recent work by Delgadino--Gvalani--Rosenzweig \cite{delgadino2026sharp}. 

\subsection{Modulated free energy as relative entropy}

The idea of the modulated Gibbs measure in the first-order setting arises from the modulated free energy method for the mean-field limit. We briefly review the related discussions by adapting to the kinetic setting.

We start with the definition of normalized relative entropy and modulated free energy.

\begin{definition}[Normalized relative entropy]\label{def:normalized-relative-entropy}
    Given any two probability measures $\mu$ and $\nu$ on the product space $E^N$ of a Polish space $E$, the normalized relative entropy between $\mu$ and $\nu$ is defined as
    \begin{equation*}
        \Ent_N (\mu|\nu)=
        \begin{cases}
              \displaystyle \frac{1}{N} \int_{E^N} \dd \mu \log \frac{\dd \mu}{\dd \nu}, &\, \mbox{if } \, \, \mu \ll \nu,\\
              \displaystyle \infty, &\, \text{otherwise}.
        \end{cases}
    \end{equation*}
\end{definition}

Motivated by the notion of free energy in physics, Bresch--Jabin--Wang \cite{bresch2019modulated,bresch2019mean,bresch2023mean} introduced the modulated free energy between $f_N$ and $f^{\otimes N}$ as follows.

\begin{definition}[Modulated free energy]\label{def:modulated-free-energy} 
The modulated free energy of $f_N$ relative to $f^{\otimes N}$ is defined by
\begin{equation*}
    \EN \left( f_N, f^{\otimes N} \right)=\frac{1}{\beta} \Ent_N \left( f_N|f^{\otimes N} \right)+ \E_{f_N} \WN(X_N;\rho_f).
\end{equation*}
\end{definition}

As observed by Rosenzweig--Serfaty \cite{rosenzweig2025modulated}, the modulated free energy equals $1/\beta$ times the normalized relative entropy of $f_N$ with respect to the modulated Gibbs measure $\QN$, up to a partition-function correction. Under the assumptions of Proposition \ref{prop:partition-function}, this correction is $O(N^{-1})$. We summarize it as the following proposition. 

\begin{proposition}\label{prop:modulated-free-energy-as-entropy}
    We have
    \begin{equation*}
      \EN \left( f_N, f^{\otimes N} \right)=\frac{1}{\beta} \Ent_N \left( f_N|\QN \right)- \frac{1}{\beta N} \log \ZN.
    \end{equation*}
    Moreover, under the assumptions of Proposition \ref{prop:partition-function}, we have
    \begin{equation*}
        \frac{1}{\beta} \Ent_N \left( f_N|\QN \right)-\frac{C_\beta}{N} \leq \EN \left( f_N, f^{\otimes N} \right) \leq \frac{1}{\beta} \Ent_N \left( f_N|\QN \right)+\frac{C_\beta}{N}.
    \end{equation*}
\end{proposition}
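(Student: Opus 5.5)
The identity follows by writing out the density of $\QN$ explicitly and expanding the logarithm inside $\Ent_N(f_N|\QN)$. The two-sided bound then follows by inserting Proposition \ref{prop:partition-function}.

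\textbf{Step 1 (the identity).} If $f_N \not\ll f^{\otimes N}$, then also $f_N\not\ll\QN$, since the weight $\exp(-\beta N\WN)$ is strictly positive wherever $\WN<\infty$. In that case both sides are $+\infty$, with the convention that $\E_{f_N}\WN$ is bounded below. I will comment on this convention below and now focus on the case $f_N\ll f^{\otimes N}$.

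Since $\dd\QN/\dd f^{\otimes N}=\ZN^{-1}\exp(-\beta N\WN(X_N;\rho_f))$, the chain rule for Radon--Nikodym derivatives gives
\begin{equation*}
    \log\frac{\dd f_N}{\dd \QN}=\log\frac{\dd f_N}{\dd f^{\otimes N}}+\beta N\,\WN(X_N;\rho_f)+\log\ZN .
\end{equation*}
Integrating against $f_N$ and dividing by $N$ yields
\begin{equation*}
    \Ent_N(f_N|\QN)=\Ent_N(f_N|f^{\otimes N})+\beta\,\E_{f_N}\WN(X_N;\rho_f)+\frac1N\log\ZN .
\end{equation*}
Dividing by $\beta$ and recalling Definition \ref{def:modulated-free-energy} gives exactly
\begin{equation*}
    \EN(f_N,f^{\otimes N})=\tfrac1\beta\Ent_N(f_N|\QN)-\tfrac{1}{\beta N}\log\ZN .
\end{equation*}

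\textbf{Step 2 (the bounds).} Proposition \ref{prop:partition-function} gives $|\log \ZN|\le \log C_\beta$ uniformly in $N$. Hence
\begin{equation*}
    \Bigl|\tfrac{1}{\beta N}\log\ZN\Bigr|\le \frac{C_\beta}{N},
\end{equation*}
after renaming the constant $\beta^{-1}\log C_\beta$ as $C_\beta$. Substituting this into the identity gives both inequalities.

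\textbf{Main obstacle.} The only delicate point is justifying the splitting of the integral in Step 1, which requires avoiding an $\infty-\infty$ situation.

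Here $\WN$ need not be bounded below pointwise. Indeed, the self-interaction is removed in \eqref{eq:modulated-energy}, so $\WN$ can be negative of order $N^{-1}\log N$ or similar. To handle this, I would use the identity $\WN+\frac1N\sum_i\phi(x_i)$-type lower bounds for Riesz and log potentials, or more simply the fact that $\ZN<\infty$ together with the Donsker--Varadhan variational inequality. The latter gives
\begin{equation*}
    -\beta\,\E_{f_N}\WN\le \Ent_N(f_N|f^{\otimes N})+\tfrac1N\log\ZN .
\end{equation*}
Consequently, $\E_{f_N}\WN^-$ is finite whenever $\Ent_N(f_N|f^{\otimes N})<\infty$, so the expectations in Step 1 are well defined and the splitting is legitimate.

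In the case $\Ent_N(f_N|f^{\otimes N})=\infty$, the same inequality applied the other way, with $\QN$ as the reference measure, shows that $\Ent_N(f_N|\QN)=\infty$ as well. The identity then holds in the extended sense, as $+\infty=+\infty$.
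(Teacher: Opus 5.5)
Your argument is correct and is essentially the paper's: the identity is the chain rule $\log(\dd f_N/\dd\QN)=\log(\dd f_N/\dd f^{\otimes N})+\beta N\WN+\log\ZN$ integrated against $f_N$, and the two-sided bound is just $|\log\ZN|\le\log C_\beta$ from Proposition \ref{prop:partition-function}. Your integrability discussion goes beyond what the paper addresses, and one step there needs an extra ingredient: deducing $\Ent_N(f_N|\QN)=\infty$ from $\Ent_N(f_N|f^{\otimes N})=\infty$ via Donsker--Varadhan with reference $\QN$ requires $\E_{\QN}e^{-\beta N\WN}=\overline{Z}_{N,2\beta}/\ZN<\infty$, i.e.\ Proposition \ref{prop:partition-function} at inverse temperature $2\beta$, exactly as in \eqref{eq:triangle-formula}.
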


The first inequality recovers the coercivity estimate in \cite{bresch2019mean} in the repulsive regime. See also pointwise asymptotic positivity estimates of the modulated energy in \cite{serfaty2020mean}.

We proceed to compute the time derivative of the modulated free energy under \eqref{eq:forward-kolmogorov} and \eqref{eq:vlasov}. We start with the definition of normalized relative Fisher information in the velocity variables

\begin{definition}[Normalized relative Fisher information in the velocity variables]\label{def:normalized-relative-fisher}
    Given any two probability measures $\mu$ and $\nu$ on the product space $\Omega^N$, the normalized relative Fisher information in the velocity variables between $\mu$ and $\nu$ is defined as
    \begin{equation*}
        \FI_N^v (\mu|\nu)=
        \begin{cases}
            \displaystyle \frac{1}{N} \sum_{i=1}^N \int_{\Omega^N} \dd \mu \left| \nabla_{v_i} \log \frac{\dd \mu}{\dd \nu} \right|^2, &\,  \mbox{if\, }\, \mu \ll \nu,\\
            \displaystyle \infty, &\, \text{otherwise}.
        \end{cases}
    \end{equation*}
\end{definition}

The following formal calculation illustrates the dissipation structure of the modulated free energy. The appearance of the relative Fisher information $\FI_N^v (f_N|\QN)$ is suggested by Proposition \ref{prop:modulated-free-energy-as-entropy}.

\begin{proposition}[Evolution of modulated free energy]\label{prop:evolution-modulated-free-energy}
    Let $f_N$ and $f$ be classical solutions of the forward Kolmogorov equation \eqref{eq:forward-kolmogorov} and the Vlasov--Fokker--Planck equation \eqref{eq:vlasov}, respectively. Then the modulated free energy satisfies the evolution identity
    \begin{equation*}
        \frac{\dd}{\dd t}\EN \left( f_N, f^{\otimes N} \right)=-\frac{1}{\beta^2} \FI_N^v (f_N|\QN)+\E_{f_N} \CN[a_f].
    \end{equation*}
    The commutator-type functional $\CN$ is given by
    \begin{equation*}
        \CN[a]= \frac{1}{2} \iint_{\Omega^2 \setminus \Delta} \nabla g(x-y) \cdot \left( a(z)-a(z') \right) \dd (L_N-f)^{\otimes 2} (z,z'),
    \end{equation*}
    where $L_N$ is the empirical measure and the function $a_f$ is given by
    \begin{equation*}
        a_f(z)=v+ \frac{1}{\beta} \nabla_v \log f(z).
    \end{equation*}
\end{proposition}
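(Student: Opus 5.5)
The identity comes from differentiating the two pieces of $\EN$ separately and recombining them. I use Proposition \ref{prop:modulated-free-energy-as-entropy} as the guide for which Fisher information should appear.

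\textbf{Step 1: the relative entropy.} Write $\frac{1}{\beta}\Ent_N(f_N|f^{\otimes N})=\frac{1}{\beta N}\int f_N\log f_N-\frac{1}{\beta N}\int f_N\sum_i\log f(z_i)$.

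For the first term, use \eqref{eq:forward-kolmogorov}. The transport part $v_i\cdot\nabla_{x_i}+\frac1N\sum_{j\neq i}K(x_i-x_j)\cdot\nabla_{v_i}$ is divergence free in $Z_N$, so it contributes nothing. The diffusion gives $-\frac{1}{\beta^2 N}\sum_i\int f_N|\nabla_{v_i}\log f_N|^2$.

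For the cross term, write $\partial_t\log f$ using \eqref{eq:vlasov}, integrate by parts in the $N$-particle transport and diffusion, and collect everything. The drift part of the cross term should reduce to
\begin{equation*}
\frac{1}{\beta N}\,\E_{f_N}\sum_i\nabla_{v_i}\log f(z_i)\cdot\Big(\frac1N\sum_{j\ne i}K(x_i-x_j)-K*\rho_f(x_i)\Big),
\end{equation*}
plus Fisher-type terms in $\nabla_{v_i}\log f$ coming from the Laplacians.

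\textbf{Step 2: the modulated energy.} Expanding \eqref{eq:modulated-energy}, $\WN$ consists of the pair sum $\frac{1}{2N^2}\sum_{i\ne j}g(x_i-x_j)$, the cross term $-\frac1N\sum_i g*\rho_f(x_i)$, and the constant $\frac12\iint g\,\rho_f\rho_f$.
\begin{itemize}
\item $\E_{f_N}$ of the pair sum and of the cross term changes only through the transport $v_i\cdot\nabla_{x_i}$, because these functions do not depend on $v$.
\item The explicit time dependence of $\rho_f$ enters through the continuity equation $\partial_t\rho_f+\nabla_x\cdot j_f=0$ with $j_f=\int vf\,\dd v$.
\end{itemize}
Collecting these gives
\begin{equation*}
\frac{\dd}{\dd t}\E_{f_N}\WN=-\E_{f_N}\iint_{\diag^c}\nabla g(x-y)\cdot v\,\dd(L_N-f)(z)\,\dd(L_N-f)(z').
\end{equation*}
By antisymmetry of $\nabla g$ this equals the $v$-part of $\CN[a_f]$, namely $\frac12\iint\nabla g(x-y)\cdot(v-v')\,\dd(L_N-f)^{\otimes 2}$ up to sign conventions.

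\textbf{Step 3: completing the square.} Add Steps 1 and 2. Using $K=-\nabla g$, the interaction term from Step 1 becomes the $\frac1\beta\nabla_v\log f$ part of $\CN[a_f]$. To see this:
\begin{itemize}
\item rewrite $\frac1N\sum_{j}K(x_i-x_j)-K*\rho_f(x_i)$ as $-\int\nabla g(x_i-y)\,\dd(L_N-f)(y)$;
\item add the $f$–$f$ and $f$–$L_N$ terms, which vanish because $\int\nabla_v\log f\,f\,\dd v=0$;
\item symmetrize.
\end{itemize}
The remaining Fisher-type terms are $-\frac{1}{\beta^2N}\sum_i\E|\nabla_{v_i}\log f_N|^2$, a cross term $2\nabla_{v_i}\log f_N\cdot\nabla_{v_i}\log f(z_i)$, and a $|\nabla_v\log f|^2$ contribution. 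Since $\log\QN=\sum_i\log f(z_i)-\beta N\WN-\log\ZN$ and $\WN$ is $v$-independent, $\nabla_{v_i}\log(f_N/\QN)=\nabla_{v_i}\log f_N-\nabla_{v_i}\log f(z_i)$. The Fisher terms should therefore assemble into $-\frac{1}{\beta^2}\FI_N^v(f_N|\QN)$.

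\textbf{Main obstacle.} The delicate bookkeeping is the cross term $\frac{1}{\beta^2N}\E\,\Delta_{v_i}\log f$ against $\partial_t\log f$. The identity $\Delta\log f=\Delta f/f-|\nabla\log f|^2$ must produce exactly the $+|\nabla_v\log f|^2$ and $-2\nabla\log f_N\cdot\nabla\log f$ terms needed to complete the square; I expect this check to take the most care. The sign conventions in $\CN$ also need to be matched once, including the exclusion of the diagonal. Since $f_N$ and $f$ are classical, all integrations by parts are justified formally, and no estimate is needed.
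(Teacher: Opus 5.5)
Your plan matches the paper's proof. You differentiate $\frac1\beta\Ent_N(f_N|f^{\otimes N})$ and $\E_{f_N}\WN$ separately, let the Laplacian terms complete the square, and use that the Gibbs tilt is $v$-independent, so that $\FI_N^v(f_N|f^{\otimes N})=\FI_N^v(f_N|\QN)$. The Fisher-information bookkeeping you flag as the main obstacle closes exactly as you predict, via $\Delta_v\log f=\Delta_v f/f-|\nabla_v\log f|^2$ and $\int f_N\,\Delta_{v_i}\log f(z_i)=-\int f_N\nabla_{v_i}\log f_N\cdot\nabla_v\log f(z_i)$.

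\textbf{The gap is a sign error.} Both of your displayed interaction formulas have the wrong sign, and ``up to sign conventions'' cannot repair this, because $\CN$ is fixed by the statement. By antisymmetry, $\CN[a]=\iint_{\Omega^2\setminus\diag}\nabla g(x-y)\cdot a(z)\,\dd(L_N-f)^{\otimes 2}$. Insert your (correct) rewriting $\frac1N\sum_{j\neq i}K(x_i-x_j)-K*\rho_f(x_i)=-\int_{y\neq x_i}\nabla g(x_i-y)\,\dd(\mu_N-\rho_f)(y)$ into your two displays. They then add up to $-\E_{f_N}\CN[a_f]$, the opposite of the claim.

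\textbf{Step 1, corrected.} Set $F_i=\frac1N\sum_{j\neq i}K(x_i-x_j)$. The generator acting on $-\frac{1}{\beta N}\sum_i\log f(z_i)$ gives $-\frac{1}{\beta N}\sum_i F_i\cdot\nabla_v\log f(z_i)$. The explicit time derivative $-\frac{1}{\beta N}\sum_i\partial_t\log f(z_i)$ contributes $+\frac{1}{\beta N}\sum_i K*\rho_f(x_i)\cdot\nabla_v\log f(z_i)$. So the drift term is
\begin{equation*}
-\frac{1}{\beta N}\E_{f_N}\sum_i\nabla_v\log f(z_i)\cdot\bigl(F_i-K*\rho_f(x_i)\bigr)=\frac1\beta\E_{f_N}\iint_{\Omega^2\setminus\diag}\nabla g(x-y)\cdot\nabla_v\log f(z)\,\dd(L_N-f)^{\otimes 2}.
\end{equation*}

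\textbf{Step 2, corrected.} The pair term already fixes the sign: along $\dot x_i=v_i$ one has $\frac{\dd}{\dd t}g(x_i-x_j)=\nabla g(x_i-x_j)\cdot(v_i-v_j)$. The full computation, including the continuity-equation terms, gives
\begin{equation*}
\frac{\dd}{\dd t}\E_{f_N}\WN=+\E_{f_N}\iint_{\Omega^2\setminus\diag}\nabla g(x-y)\cdot v\,\dd(L_N-f)^{\otimes2}.
\end{equation*}
With these two signs the sum is exactly $\E_{f_N}\CN[a_f]$. This matches the paper's two displayed derivatives once they are written with $K=-\nabla g$.
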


\begin{proof}
    We compute the time derivative of the relative entropy part and the modulated energy part separately. A direct relative entropy computation gives
    \begin{equation*}
        \begin{aligned}
            \frac{\dd}{\dd t} \Ent_N \left( f_N|f^{\otimes N} \right) = &\, -\frac{1}{\beta} \FI_N^v \left( f_N|f^{\otimes N} \right)\\
            &\, -\frac{1}{2} \E_{f_N} \iint_{\Omega^2 \setminus \Delta} K(x-y)\cdot \left( \nabla_v \log f(z)-\nabla_w \log f(z') \right) \dd (L_N-f)^{\otimes 2} (z,z').
        \end{aligned}
    \end{equation*}
    The modulated energy computation shows
    \begin{equation*}
        \frac{\dd}{\dd t} \WN(X_N;\rho_f)= -\frac{1}{2} \iint_{\Omega^2 \setminus \Delta} K(x-y) \cdot (v-w) \dd (L_N-f)^{\otimes 2} (z,z').
    \end{equation*}
    Since the Gibbs tilt depends only on $x$-variables, we simply have
    \begin{equation*}
        \nabla_{v_i} \log \frac{f_N}{\QN}=\nabla_{v_i} \log \frac{f_N}{f^{\otimes N}},
    \end{equation*}
    and thus $\displaystyle \FI_N^v \left( f_N|f^{\otimes N} \right)=\FI_N^v (f_N|\QN)$. This completes the proof.
\end{proof}

A similar observation was also made by Rosenzweig--Serfaty \cite{rosenzweig2025modulated} for first-order systems, where they aimed to prove a uniform log-Sobolev inequality for the modulated Gibbs measure. 

\subsection{Relative entropy bound}

In this subsection, we show that the modulated Gibbs measure $\QN$ is actually $f$-chaotic in a strong sense. It is direct to estimate in the sense of relative entropy.

\begin{lemma}[Relative entropy estimates]\label{lem:relative-entropy}
    Let $\beta>0$ and let $g=g_{d,s}$ in \eqref{eq:admissible-potential}. Assume that $f \in L^1 \cap L^\infty(\Omega)$ with $\rho_f \in L^\infty(\D)$. Assume further that \eqref{eq:exponential-integrability} holds  if $s=0$. Then we have
    \begin{equation*}
        \Ent_N \left( \QN|f^{\otimes N} \right) \leq \frac{C_\beta}{N}, \quad \Ent_N \left( f^{\otimes N}|\QN \right) \leq \frac{C_\beta}{N}.
    \end{equation*}
\end{lemma}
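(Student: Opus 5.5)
The plan is to compute both relative entropies explicitly from the density $\dd\QN/\dd f^{\otimes N}=\ZN^{-1}\exp(-\beta N\WN(X_N;\rho_f))$. Then we control the resulting expressions using Proposition~\ref{prop:partition-function}, Jensen's inequality, and the identity of Proposition~\ref{prop:modulated-free-energy-as-entropy}.

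\emph{Second bound.} Here
\begin{equation*}
\Ent_N(f^{\otimes N}|\QN)=\frac1N\int f^{\otimes N}\Big(\beta N\WN(X_N;\rho_f)+\log\ZN\Big)\dd Z_N=\beta\,\E_{f^{\otimes N}}\WN(X_N;\rho_f)+\frac1N\log\ZN .
\end{equation*}
The key observation is that $\E_{\rho_f^{\otimes N}}\WN(X_N;\rho_f)$ is exactly computable. Because the diagonal is excluded, expanding $(\mu_N-\rho_f)^{\otimes2}$ and using independence gives
\begin{equation*}
\frac{N(N-1)}{2N^2}\iint g\,\rho_f\rho_f-\iint g\,\rho_f\rho_f+\frac12\iint g\,\rho_f\rho_f=-\frac{1}{2N}\iint g(x-y)\rho_f(x)\rho_f(y)\dd x\dd y .
\end{equation*}
This is $O(1/N)$, since $\rho_f\in L^1\cap L^\infty$ and $g$ is locally integrable for $s<d$. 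In the log case on $\R^d$, we use the moment bound \eqref{eq:exponential-integrability} to handle the growth of $\log|x-y|$ at infinity. Together with $\log\ZN\le\log C_\beta$, this yields the second bound. Incidentally, this same computation is the Jensen lower bound $\log\ZN\ge-\beta N\E\WN=O(1)$ used in Proposition~\ref{prop:partition-function}.

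\emph{First bound.} Similarly,
\begin{equation*}
\Ent_N(\QN|f^{\otimes N})=-\beta\,\E_{\QN}\WN(X_N;\rho_f)-\frac1N\log\ZN .
\end{equation*}
The term $-\frac1N\log\ZN\le \frac1N\log C_\beta$ is fine. The main obstacle is an upper bound on $-\E_{\QN}\WN$, i.e.\ that $\WN$ is not too negative under $\QN$. I would not use pointwise lower bounds of Serfaty type, which bring extra constants and $\log N$ terms in the log case. Instead I would apply the partition-function estimate at a different inverse temperature. Set $\phi(\lambda)=\log\int e^{-\lambda N\WN}\rho_f^{\otimes N}$. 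This function is convex, and $\phi'(\beta)=-N\E_{\QN}\WN$. Convexity gives
\begin{equation*}
-N\E_{\QN}\WN=\phi'(\beta)\le \frac{\phi(2\beta)-\phi(\beta)}{\beta}\le\frac{\log C_{2\beta}+\log C_\beta}{\beta}.
\end{equation*}
Here Proposition~\ref{prop:partition-function} is applied at $2\beta$ for the upper bound and at $\beta$ for the lower bound. Hence $-\beta\E_{\QN}\WN\le C_\beta/N$, which proves the first bound.

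The one point to check is that Proposition~\ref{prop:partition-function} holds at every $\beta>0$, which it does as stated, so using $2\beta$ is legitimate. One should also justify differentiating $\phi$ under the integral, for instance through monotone convergence or by using the convexity inequality directly in its integrated (Hölder) form. Both bounds are then $C_\beta/N$ with constants depending on $\beta$, $\|\rho_f\|_{L^\infty}$, and $M_{a,b}$ when $s=0$.
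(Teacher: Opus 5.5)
Your proposal is correct and follows the paper's proof. It uses the same explicit formulas for the two entropies, the same exact computation $\E_{f^{\otimes N}}\WN=-\frac{1}{2N}\iint g\rho_f\rho_f$ (your note that the log case on $\R^d$ needs \eqref{eq:exponential-integrability} here is a welcome precision), and the same appeal to Proposition~\ref{prop:partition-function} at $2\beta$.

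Your convexity inequality $\beta\phi'(\beta)\le\phi(2\beta)-\phi(\beta)$ is exactly what the paper's Donsker--Varadhan step produces, once it absorbs $\frac{1}{2\beta}\Ent_N(\QN|f^{\otimes N})$. You can also get it without differentiating under the integral: Jensen's inequality under $\QN$ gives $e^{-\beta N\E_{\QN}\WN}\le\E_{\QN}e^{-\beta N\WN}=\overline{Z}_{N,2\beta}/\ZN$.
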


\begin{proof}
    A direct computation shows
    \begin{align*}
        \Ent_N \left( f^{\otimes N}|\QN \right)=&\, \frac{1}{N} \int_{\Omega^N} f^{\otimes N} \log \left( \ZN \exp \left( \beta N \WN(X_N;\rho_f) \right) \right)\\
        =&\, \frac{\log \ZN}{N}+\beta \E_{f^{\otimes N}} \WN(X_N;\rho_f).
    \end{align*}
    The first term is bounded by $C_\beta/N$ by Proposition \ref{prop:partition-function}, and the second term is bounded by $C_\beta/N$ using the simple computation
    \begin{equation*}
        \E_{f^{\otimes N}} \WN(X_N;\rho_f)=-\frac{1}{2N} \int_\D g \ast \rho_f \rho_f,
    \end{equation*}
    since $\WN(X_N;\rho_f)$ is almost centered with respect to $\rho_f$ and $\rho_f \in L^1 \cap L^\infty(\D)$ by assumption. Similarly, we have
    \begin{align*}
        \Ent_N \left( \QN|f^{\otimes N} \right)=&\, \frac{1}{N} \int_{\Omega^N} \QN \log \left( \frac{1}{\ZN} \exp \left( -\beta N \WN(X_N;\rho_f) \right) \right)\\
        =&\, -\frac{\log \ZN}{N}-\beta \E_{\QN} \WN(X_N;\rho_f).
    \end{align*}
    The first term is bounded by $C_\beta/N$ by Proposition \ref{prop:partition-function} again, and the second term is estimated by applying the Donsker--Varadhan variational inequality (see \cite[Lemma 1]{jabin2018quantitative})
    \begin{equation*}
        -\E_{\QN} \WN(X_N;\rho_f) \leq \frac{1}{2\beta} \Ent_N \left( \QN|f^{\otimes N} \right)+\frac{1}{2\beta N} \log \E_{f^{\otimes N}} \exp \left( -2\beta N \WN(X_N;\rho_f) \right).
    \end{equation*}
    By Proposition \ref{prop:partition-function}, the last term is bounded by $C_\beta/N$, and the result follows.
\end{proof}

We also introduce the notion of R\'enyi divergence.

\begin{definition}[R\'enyi divergence]\label{def:renyi-divergence}
    Given any $p>1$ and any two probability measures $\mu$ and $\nu$ on $E^N$, the $p$-R\'enyi divergence between $\mu$ and $\nu$ is defined as
    \begin{equation*}
        \Ren_p (\mu|\nu)= \frac{1}{p-1} \log \int_{E^N} \dd \nu \left( \frac{\dd \mu}{\dd \nu}\right)^p.
    \end{equation*}
\end{definition}

A direct computation and the uniform partition-function estimates show that $\QN$ is $f$-chaotic in the sense of R\'enyi divergence.

\begin{lemma}[R\'enyi divergence estimates]\label{lem:renyi-divergence}
    Let $\beta>0$ and let $g=g_{d,s}$ in \eqref{eq:admissible-potential} or $g=g_{d,s}^{(m)}$. Assume that $f \in L^1 \cap L^\infty(\Omega)$ with $\rho_f \in L^\infty(\D)$, together with the extra assumption \eqref{eq:exponential-integrability} if $s=0$. Then we have
    \begin{equation*}
        \Ren_p \left( \QN|f^{\otimes N} \right)= \frac{1}{p-1} \left( \log Z_{N,p\beta}-p \log \ZN \right) \leq C_{\beta,p}.
    \end{equation*}
\end{lemma}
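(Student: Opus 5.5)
The plan is to compute the R\'enyi divergence explicitly from the density of $\QN$ and then bound each resulting partition function with Proposition \ref{prop:partition-function}.

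\textbf{The identity.} Since $\QN(f)$ has density
\begin{equation*}
\frac{\dd \QN}{\dd f^{\otimes N}}=\ZN^{-1}\exp\left(-\beta N\WN(X_N;\rho_f)\right)
\end{equation*}
with respect to $f^{\otimes N}$, raising it to the power $p$ and integrating against $f^{\otimes N}$ gives
\begin{equation*}
\int_{\Omega^N}\left(\frac{\dd \QN}{\dd f^{\otimes N}}\right)^p \dd f^{\otimes N}
=\ZN^{-p}\int_{\Omega^N}\exp\left(-p\beta N\WN(X_N;\rho_f)\right) f^{\otimes N}(Z_N)\,\dd Z_N
=\frac{Z_{N,p\beta}}{\ZN^{p}}.
\end{equation*}
Here $Z_{N,p\beta}$ is the partition function of Definition \ref{def:modulated-gibbs} with $\beta$ replaced by $p\beta$. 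The reference density $f$, its spatial marginal $\rho_f$ and the modulated energy are unchanged; only the inverse temperature changes. Taking $\frac{1}{p-1}\log$ of both sides yields the stated identity
\begin{equation*}
\Ren_p\left(\QN|f^{\otimes N}\right)=\frac{1}{p-1}\left(\log Z_{N,p\beta}-p\log \ZN\right).
\end{equation*}

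\textbf{The bound.} Proposition \ref{prop:partition-function} holds for every inverse temperature $\beta>0$, so it applies equally at inverse temperature $p\beta$. This gives $\log Z_{N,p\beta}\le \log C_{p\beta}$. It also gives $-p\log\ZN\le p\log C_\beta$, which uses the lower bound $\ZN\ge C_\beta^{-1}$; alternatively, Jensen's inequality with $\E_{f^{\otimes N}}\WN=O(N^{-1})$, as computed in Lemma \ref{lem:relative-entropy}, gives $\log\ZN\ge -C/N$. Combining the two,
\begin{equation*}
\Ren_p\left(\QN|f^{\otimes N}\right)\le \frac{\log C_{p\beta}+p\log C_\beta}{p-1}=:C_{\beta,p},
\end{equation*}
which is independent of $N$.

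\textbf{Bessel--Riesz potentials.} For $g=g^{(m)}_{d,s}$ the same two steps go through unchanged, because Remark \ref{rmk:log-bessel-riesz} states that the uniform partition-function estimate also holds for these kernels.

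\textbf{Main point to check.} The only substantive input is that the uniform-in-$N$ estimate of Wang--Zhao is valid at every inverse temperature, here at $p\beta$, with a constant depending only on $p\beta$ and $\|\rho_f\|_{L^\infty}$ (and on $M_{a,b}$ when $s=0$). Once that is granted, the lemma is a direct computation.
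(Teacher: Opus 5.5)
Your proposal is correct and follows the paper's intended argument, which the paper only summarizes as ``a direct computation and the uniform partition-function estimates'': writing $\dd\QN/\dd f^{\otimes N}=\ZN^{-1}e^{-\beta N\WN}$ gives $\int(\dd\QN/\dd f^{\otimes N})^p\,\dd f^{\otimes N}=Z_{N,p\beta}/\ZN^{p}$, and Proposition~\ref{prop:partition-function} applied at inverse temperatures $p\beta$ and $\beta$ bounds both terms uniformly in $N$. One minor slip in your aside: Jensen's inequality gives $\log\ZN\ge-\beta N\,\E_{f^{\otimes N}}\WN=\frac{\beta}{2}\int_{\D}g\ast\rho_f\,\rho_f$, which is an $O(1)$ lower bound rather than $-C/N$ (the factor $N$ cancels the $N^{-1}$), though this is still enough for the lemma.
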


By the triangle-like inequality for R\'enyi divergence (see for instance \cite[Lemma 1.4.3]{dupuis1997weak}), we have
\begin{equation}\label{eq:triangle-formula}
    \Ent_N \left( f_N|f^{\otimes N} \right) \leq \frac{p}{p-1} \Ent_N (f_N|\QN)+\frac{1}{N} \Ren_p \left( \QN|f^{\otimes N} \right) \leq \frac{p}{p-1} \Ent_N (f_N|\QN)+\frac{C_{\beta,p}}{N}.
\end{equation}
This shows that a bound on $\Ent_N(f_N|\QN)$ in turn controls the classical normalized relative entropy. This coincides with the modulated free energy method in \cite{bresch2019modulated,bresch2019mean}, where the authors proved that the modulated free energy does control the relative entropy by exploiting pointwise lower bounds of modulated energy in \cite{serfaty2020mean}.

\section{Proof of main results}\label{sec:proof-main-results}

In this section, we prove our main results.

\subsection{Proof of Theorem \ref{thm:one-initial-data}}

The starting point is a stability-type estimate for the entropy solution of \eqref{eq:forward-kolmogorov}. 

\begin{proposition}[Stability-type estimate]\label{prop:stability-type-estimates}
    Consider any pair of  bounded symmetric probability density functions $f_N^0$ and $g_N^0$ on $\Omega^N$ such that
    \begin{equation*}
        \Ent_N (f_N^0|g_N^0) \leq \frac{C_\beta}{N}.
    \end{equation*}
    Let $f_N$ be any entropy solution of the forward Kolmogorov equation \eqref{eq:forward-kolmogorov} with initial data $f_N^0$. Then there exists an entropy solution of \eqref{eq:forward-kolmogorov} with initial data $g_N^0$, denoted by $g_N$, such that
    \begin{equation*}
        \Ent_N (f_N|g_N)(t) \leq \frac{C_\beta}{N}
    \end{equation*}
    for any $t \geq 0$.
\end{proposition}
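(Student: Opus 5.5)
The plan is to build $g_N$ by approximation and to control the relative entropy through the fact that two solutions of the same linear Fokker--Planck equation do not increase their mutual relative entropy. Since entropy solutions are not unique, we cannot simply take ``the'' solution from $g_N^0$; instead we regularize both the dynamics and the data so that everything is classical, prove the entropy monotonicity at that level, and pass to the limit.

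\textbf{Step 1 (regularization).} Fix the given entropy solution $f_N$. Following the construction of entropy solutions in \cite[Subsection 2.4]{bresch2025new}, I would first assume that $f_N$ is a weak limit of classical solutions $f_N^\varepsilon$ of \eqref{eq:forward-kolmogorov} with a regularized kernel $K_\varepsilon$ and mollified initial data $f_N^{0,\varepsilon}\to f_N^0$. If the given $f_N$ is not of this form, the definition must be used directly, and this is the delicate point (see below). With the same $K_\varepsilon$, let $g_N^\varepsilon$ be the classical solution started from a mollification $g_N^{0,\varepsilon}$ of $g_N^0$, using the same mollifier. By joint convexity of relative entropy, $\Ent_N(f_N^{0,\varepsilon}|g_N^{0,\varepsilon})\le \Ent_N(f_N^0|g_N^0)\le C_\beta/N$.

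\textbf{Step 2 (monotonicity).} For two classical solutions of the same linear equation with smooth drift, the transport part $\sum_i(v_i\cdot\nabla_{x_i}+\frac1N\sum_{j\ne i}K_\varepsilon(x_i-x_j)\cdot\nabla_{v_i})$ is divergence free, so it does not contribute. A direct computation then gives
\begin{equation*}
\frac{\dd}{\dd t}\Ent_N(f_N^\varepsilon|g_N^\varepsilon)=-\frac{1}{\beta}\FI_N^v(f_N^\varepsilon|g_N^\varepsilon)\le 0 .
\end{equation*}
Hence $\Ent_N(f_N^\varepsilon|g_N^\varepsilon)(t)\le C_\beta/N$ for all $t\ge0$, uniformly in $\varepsilon$. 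Justifying the integrations by parts requires moment and decay bounds at fixed $\varepsilon$ and $N$, which are standard.

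\textbf{Step 3 (compactness and limit).} The family $g_N^\varepsilon$ has the uniform entropy, kinetic energy and dissipation bounds that the existence theory of \cite{bresch2025new} uses. Along a subsequence, $g_N^\varepsilon$ then converges weakly to an entropy solution $g_N$ with initial data $g_N^0$. Relative entropy is jointly weakly lower semicontinuous. Since $f_N^\varepsilon\rightharpoonup f_N$ and $g_N^\varepsilon\rightharpoonup g_N$ at each $t$, or for a.e.\ $t$ and then for all $t$ by weak continuity in time, we get $\Ent_N(f_N|g_N)(t)\le\liminf_\varepsilon \Ent_N(f_N^\varepsilon|g_N^\varepsilon)(t)\le C_\beta/N$.

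\textbf{Main obstacle.} The hard part is that $f_N$ is an \emph{arbitrary} entropy solution, not necessarily one produced by our chosen approximation, so Step 1 may not be available. I would first try to avoid approximating $f_N$ at all: take the linear equation with the fixed singular drift, and build $g_N$ as the solution of the regularized problem $\partial_t g+\mathcal L_\varepsilon g=0$. Then compute $\frac{\dd}{\dd t}\int f_N\log(f_N/g_N^\varepsilon)$ using $f_N$'s weak formulation tested against the smooth function $\log g_N^\varepsilon$, together with the entropy inequality satisfied by $f_N$. This produces an error term
\begin{equation*}
\frac1N\sum_i\E_{f_N}\Big[\tfrac1N\sum_{j\ne i}(K-K_\varepsilon)(x_i-x_j)\cdot\nabla_{v_i}\log g_N^\varepsilon\Big],
\end{equation*}
which must vanish as $\varepsilon\to0$. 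Controlling it would use the Fisher-information dissipation of $f_N$ relative to $g_N^\varepsilon$ via Cauchy--Schwarz, together with $K\in L^p_{\mathrm{loc}}$ and the integrability of $f_N$ guaranteed by its entropy bounds. The difficulty is doing this at fixed $N$ without losing the $C_\beta/N$ scale; at fixed $N$ any vanishing error is acceptable, so this seems feasible.
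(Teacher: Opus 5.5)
Your Steps 1--3 are the paper's proof: it likewise takes $f_N$, ``by the construction of entropy solutions'', to be a weak limit of classical solutions $f_{N,\varepsilon}$ with kernel $K_\varepsilon$, evolves $g_{N,\varepsilon}$ from $g_N^0$ under the same $K_\varepsilon$, uses $\frac{\dd}{\dd t}\Ent_N(f_{N,\varepsilon}|g_{N,\varepsilon})=-\frac{1}{\beta}\FI_N^v(f_{N,\varepsilon}|g_{N,\varepsilon})\le 0$, and concludes by joint lower semicontinuity, so the point you flag as the main obstacle is one the paper takes for granted rather than addresses (the paper also does not mollify the data, so your joint-convexity step is a harmless extra). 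Your fallback for a general entropy solution is therefore not needed to match the paper, and as sketched it would not close in the 3D Coulomb case, since absorbing the error into the Fisher information via Cauchy--Schwarz requires $\int_0^t\E_{f_N}|K-K_\varepsilon|^2(x_i-x_j)\,\dd s\to0$, which neither $K\in L^p_{\mathrm{loc}}$ with $p<3/2$ nor the entropy bounds on $f_N$ provide.
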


\begin{remark}
    One may distinguish this result from the standard stability results. Proposition \ref{prop:stability-type-estimates} does not assert stability for arbitrary pairs of entropy solutions. For each entropy solution $f_N$ with initial datum $f_N^0$, one can choose an entropy solution $g_N$ with initial datum $g_N^0$ such that the relative entropy bound holds for all time. We do not require uniqueness of entropy solutions.
\end{remark}

\begin{proof}
    By the construction of entropy solutions of \eqref{eq:forward-kolmogorov}, there exists a family of regularized solutions $f_{N,\varepsilon}$ with initial data $f_N^0$ such that
    \begin{equation*}
        \partial_t f_{N,\varepsilon} +\sum_{i=1}^N \bigg( v_i \cdot \nabla_{x_i}+\frac{1}{N} \sum_{j \neq i} K_\varepsilon (x_i-x_j) \cdot \nabla_{v_i} \bigg) f_{N,\varepsilon}=\frac{1}{\beta} \sum_{i=1}^N \Delta_{v_i} f_{N,\varepsilon}.
    \end{equation*}
    Here $K_\varepsilon$ is a standard regularization of $K$. Then $f_N$ is obtained by taking a weak limit of a subsequence, still denoted by $f_{N,\varepsilon}$. Let $g_{N,\varepsilon}$ be the classical solution of the same regularized equation with initial data $g_N^0$. Then we have
    \begin{align*}
        \frac{\dd}{\dd t} \Ent_N (f_{N,\varepsilon}|g_{N,\varepsilon})=-\frac{1}{\beta} \FI_N^v (f_{N,\varepsilon}|g_{N,\varepsilon}) \leq 0
    \end{align*}
    by direct computations. Therefore, the desired bound holds for all pairs of regularized solutions uniformly in $\varepsilon$. Take $g_N$ as the weak limit of a subsequence of $g_{N,\varepsilon}$. By the joint lower semicontinuity of relative entropy, we immediately have
    \begin{equation*}
        \Ent_N (f_N|g_N)(t) \leq \liminf_{\varepsilon \to 0} \Ent_N (f_{N,\varepsilon}|g_{N,\varepsilon})(t) \leq \frac{C_\beta}{N}.
    \end{equation*}
    The result follows since $g_N$ is a desired entropy solution by definition.
\end{proof}

The next lemma shows that under the $O(1/N)$ bound in the sense of relative entropy in Proposition \ref{prop:stability-type-estimates}, if the sequence $g_N$ is $f$-chaotic, then $f_N$ is also $f$-chaotic. Therefore, the relative entropy bound allows us to transfer chaos from $g_N$ to $f_N$. 

\begin{lemma}[Transfer of chaos]\label{lem:weak-convergence}
	For each $N\geq 1$, let $f_N$ and $g_N$ be bounded symmetric probability densities on $\Omega^N$ satisfying
	\begin{equation*}
		\Ent_N(f_N|g_N)\leq \frac{C_\beta}{N},
	\end{equation*}
	where $C_\beta$ is independent of $N$. If the sequence $g_N$ is $f$-chaotic for some bounded probability density $f$ on $\Omega$, then $f_N$ is also $f$-chaotic.
\end{lemma}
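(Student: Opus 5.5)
We prove chaos through the empirical-measure characterization \eqref{eq:empirical-convergence}. By Sznitman's equivalence, it suffices to show that for every bounded Lipschitz test function $\varphi$ on $\mathcal P(\Omega)$, where $\mathcal P(\Omega)$ carries a metric for weak convergence such as bounded-Lipschitz, we have $\E_{f_N}\varphi(L_N)\to\varphi(f)$. A cleaner route is to show, for every $\delta>0$,
\begin{equation*}
    f_N\bigl(d(L_N,f)>\delta\bigr)\to0 .
\end{equation*}
Set $A_N=\{d(L_N,f)>\delta\}$. The hypothesis that $g_N$ is $f$-chaotic gives $g_N(A_N)\to0$. We then transfer this smallness to $f_N$ using the relative entropy bound.

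The tool is the standard entropy inequality. From Donsker--Varadhan duality, or from the elementary bound $\mu(A)\log\frac{\mu(A)}{\nu(A)}+(1-\mu(A))\log\frac{1-\mu(A)}{1-\nu(A)}\le H(\mu|\nu)$, one gets for any event $A$
\begin{equation*}
    f_N(A)\le \frac{N\,\Ent_N(f_N|g_N)+\log 2}{\log\bigl(1+1/g_N(A)\bigr)}\le \frac{C_\beta+\log 2}{\log\bigl(1+1/g_N(A)\bigr)}.
\end{equation*}
Since $N\Ent_N(f_N|g_N)\le C_\beta$ is the unnormalized relative entropy and is bounded uniformly in $N$, the right-hand side tends to $0$ whenever $g_N(A_N)\to0$. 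Hence $f_N(A_N)\to0$ for every $\delta$, so $L_N\Rightarrow\delta_f$ in law under $f_N$. The symmetry of $f_N$ and Sznitman's equivalence then give $f_{N,k}\rightharpoonup f^{\otimes k}$ for every $k$.

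The main point requiring care is that the $O(1/N)$ scale of $\Ent_N$ is exactly what makes the unnormalized entropy bounded. A bound of the form $o(1)$ on $\Ent_N$ would not suffice by this argument: one would then need quantitative, exponentially small bounds on $g_N(A_N)$, which are not available from mere chaos.

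A secondary point is measurability of $A_N$, which holds because $L_N$ depends continuously on $Z_N$.

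As an alternative that avoids the empirical measure, one can apply subadditivity of relative entropy over blocks of size $k$ for symmetric laws. This argument requires $g_N$ to be close to a product structure, which we do not have. We therefore prefer the event-transfer argument above, since it uses only the chaos of $g_N$.
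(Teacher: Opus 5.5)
Your proposal is correct and is essentially the paper's own argument. You bound $f_N(A_N)$ for $A_N=\{d(L_N,f)>\delta\}$ via the Donsker--Varadhan (equivalently, binary data-processing) inequality in the optimized form $f_N(A)\le \bigl(N\Ent_N(f_N|g_N)+\log 2\bigr)/\log\bigl(1+1/g_N(A)\bigr)$, which the paper obtains by choosing $\eta=\log(1+1/q_N)$, and you invoke Sznitman's equivalence in both directions, as the paper does.
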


\begin{proof}
    Recall that $\displaystyle L_N=\frac{1}{N} \sum_{i=1}^N \delta_{z_i}$ is the empirical measure. Let $\mathbf{d}(\cdot, \cdot)$ be any metric on $\mathcal P(\Omega)$ that metrizes weak convergence, for instance the bounded Lipschitz metric. For any $\varepsilon>0$, let
    \begin{equation*}
        A_{N,\varepsilon}=\{ \mathbf{d}(L_N,f) >\varepsilon\}.
    \end{equation*}
    By the classical result of Sznitman \cite{sznitman1991topics}, since $g_N$ is $f$-chaotic, we have $q_N=g_N(A_{N,\varepsilon}) \to 0$ as $N \to \infty$. Define $p_N=f_N(A_{N,\varepsilon})$. By the Donsker--Varadhan variational inequality, for any $\eta>0$, we have
    \begin{equation*}
        p_N \leq \frac{N}{\eta} \Ent_N (f_N|g_N) +\frac{1}{\eta} \log \int \exp (\eta \mathsf{1}_{A_{N,\varepsilon}}) g_N=\frac{N}{\eta} \Ent_N (f_N|g_N) +\frac{1}{\eta} \log \left( e^\eta q_N+(1-q_N) \right).
    \end{equation*}
    Choosing $\eta=\log(1+1/q_N)$, we have
    \begin{equation*}
        p_N \leq \frac{N \Ent_N(f_N|g_N)+\log 2}{\log(1+1/q_N)}.
    \end{equation*}
    See also Yau \cite[Equation (2.18)]{yau1991relative}. By assumption, the numerator is bounded in $N$ and the denominator diverges to $+\infty$ as $N \to \infty$. Therefore, we get $p_N \to 0$ as $N \to \infty$ for any $\varepsilon>0$, and this implies that $f_N$ is $f$-chaotic again by the classical result of Sznitman.
\end{proof}

Now we can finish the proof of our first main result.

\begin{proof}[Proof of Theorem \ref{thm:one-initial-data}]
    Suppose that the mean-field convergence \eqref{eq:marginal-convergence} holds for any entropy solution $g_N$ of \eqref{eq:forward-kolmogorov} with initial value $g_N^0=\QN(f_0)$ for any $t \in [0,T]$. 

    We first prove the result for the fully tensorized initial value $f_N^0=f_0^{\otimes N}$. Applying Lemma \ref{lem:relative-entropy} to $f_0$, we get
    \begin{equation*}
        \Ent_N (f_N^0|g_N^0)=\Ent_N (f_0^{\otimes N}|\QN) \leq \frac{C_\beta}{N}.
    \end{equation*}
    Therefore, for any entropy solution $f_N$ of \eqref{eq:forward-kolmogorov} with initial value $f_N^0=f_0^{\otimes N}$, by Proposition \ref{prop:stability-type-estimates}, there exists an entropy solution of \eqref{eq:forward-kolmogorov} with initial value $g_N^0$, denoted by $g_N$, such that
    \begin{equation*}
        \Ent_N (f_N|g_N) (t) \leq \frac{C_\beta}{N}
    \end{equation*}
    for any $t \geq 0$. By assumption, $g_N(t)$ is $f_t$-chaotic for any $t \in [0,T]$. Therefore, by Lemma \ref{lem:weak-convergence}, we conclude that $f_N(t)$ is $f_t$-chaotic for any $t \in [0,T]$. 

    Then we proceed to prove the result for any entropy solution $h_N$ of \eqref{eq:forward-kolmogorov} with any initial value $h_N^0$ satisfying the relative entropy bound \eqref{eq:initial-entropy-bound}. By Proposition \ref{prop:stability-type-estimates}, there exists an entropy solution of \eqref{eq:forward-kolmogorov} with initial value $f_0^{\otimes N}$, denoted by $f_N$, such that
    \begin{equation*}
        \Ent_N (h_N|f_N) (t) \leq \frac{C_\beta}{N}
    \end{equation*}
    for any $t \geq 0$. From the previous step, we know that $f_N(t)$ is $f_t$-chaotic for any $t \in [0,T]$. Therefore, by Lemma \ref{lem:weak-convergence}, we conclude that $h_N(t)$ is $f_t$-chaotic for any $t \in [0,T]$. This completes the proof.
\end{proof}

\subsection{Proof of Theorem \ref{thm:mfl-vpfp}}

Now we proceed to prove our second main result. By Theorem \ref{thm:one-initial-data}, we only need to prove that there exists $T^\ast>0$ such that $g_N(t)$ is $f_t$-chaotic for any $t \in [0,T^\ast]$ for any entropy solution $g_N$ of \eqref{eq:forward-kolmogorov} with initial value $g_N^0=\QN(f_0)$. 

For $\D=\T^d$ we borrow the following technical proposition from \cite{bresch2025new} adapted to our notation.

\begin{proposition}[\cite{bresch2025new}, Proposition 5]\label{prop:bjs-estimate}
    Let $K \in L^p(\T^d)$ for some $p>1$ and define
    \begin{equation*}
        \lambda(t)=\frac{1}{\Lambda(1+t)}, \quad L=\frac{C}{\lambda(1)^\theta} \|K\|_{L^p}^q
    \end{equation*}
    for some positive constants $\Lambda, C, \theta$ depending only on $q, d, \beta$ and provided that $1/q+1/p \leq 1$ with $q>2$. Consider any entropy solution $g_N$ of \eqref{eq:forward-kolmogorov} with initial value $g_N^0 \in L^\infty(\Omega^N)$ and satisfying
    \begin{equation}\label{eq:bjs-condition}
        \int_{\Omega^k} |g_{N,k}^0|^q e^{\lambda(0) e_k} \dd Z_k \leq F_0^k, \quad \sup_{t \leq 1} \int_{\Omega^N} |g_N(t)|^q e^{\lambda(t) e_N} \dd Z_N \leq F^N
    \end{equation}
    for some $F, F_0>0$, where
    \begin{equation*}
        e_k(Z_k)=\sum_{i=1}^k (1+|v_i|^2) +\frac{1}{N} \sum_{i,j=1}^k g(x_i-x_j), 
    \end{equation*}
where we use the convention that $g(0)=0$.  

   Then we have
    \begin{equation*}
        \sup_{t \leq T^\ast} \int_{\Omega^k} |g_{N,k}(t)|^q e^{\lambda(t) e_k} \dd Z_k \leq (2F_0)^k+(4F)^k 2^{-N-1}
    \end{equation*}
    for some $T^\ast>0$ depending only on $L, F_0, F$.
\end{proposition}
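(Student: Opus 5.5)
This statement is borrowed from \cite{bresch2025new}, so the plan is to reconstruct their weighted BBGKY argument rather than invent a new one. The proof splits into three steps: write the hierarchy for weighted marginals, close an inequality for those weighted norms, and iterate that inequality in $k$.

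\textbf{Step 1: weighted energy identity for each marginal.} I would first work with the regularized solutions $g_{N,\varepsilon}$ of Proposition \ref{prop:stability-type-estimates}, prove bounds uniform in $\varepsilon$, and pass to the limit at the end by weak lower semicontinuity. For each $k$, the marginal $g_{N,k}$ satisfies the BBGKY hierarchy. This consists of the $k$-particle free transport, the internal forces with weight $1/N$, the Laplacian in $v$, and a coupling term
\begin{equation*}
\frac{N-k}{N}\sum_{i\le k}\int K(x_i-x_{k+1})\cdot\nabla_{v_i} g_{N,k+1}\,\dd z_{k+1}.
\end{equation*}
I would multiply by $q|g_{N,k}|^{q-2}g_{N,k}e^{\lambda(t)e_k}$ and integrate. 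The following cancellations should occur.
\begin{itemize}
\item The transport $v\cdot\nabla_x$ combined with the internal force $\frac1N\sum K\cdot\nabla_v$ preserves $e_k$. Indeed, $e_k$ is the $k$-particle energy, and the factor $1/N$ in front of $g$ is exactly the $1/N$ in the internal forces. So these terms produce no contribution beyond the conservative one.
\item Since $\lambda'<0$, the time derivative of the weight gives a good term $\lambda'(t)e_k$.
\item The diffusion gives a dissipation $-c\int|\nabla_v(|g_{N,k}|^{q/2})|^2e^{\lambda e_k}$. It also produces errors of order $\lambda|v|^2$ and $\lambda$ from differentiating the weight. Because $\lambda\le1/\Lambda$ is small, these errors are absorbed by $|\lambda'|e_k\sim\lambda^2(1+|v|^2)$ once $\Lambda$ is large relative to $\beta$ and $q$. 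This is what fixes the form $\lambda(t)=1/(\Lambda(1+t))$.
\end{itemize}

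\textbf{Step 2: the coupling term, which I expect to be the main obstacle.} I would integrate by parts in $v_i$ and apply Hölder with exponents $p$ on $K$ and $q$-type exponents on $g$. This uses $1/p+1/q\le1$ together with the velocity dissipation. The result should be a bound of the schematic form
\begin{equation*}
\frac{d}{dt}\|g_{N,k}\|_{q,\lambda,k}^q\le \frac{C k}{\lambda^\theta}\|K\|_{L^p}^q\,\|g_{N,k+1}\|_{q,\lambda,k+1}^q\le kL\,\|g_{N,k+1}\|_{q,\lambda,k+1}^q .
\end{equation*}
Two difficulties are hidden here. First, the weight $e^{\lambda e_k}$ must be compared with $e^{\lambda e_{k+1}}$. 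The extra terms are $(1+|v_{k+1}|^2)$, which is harmless, and $\frac1N\sum_{i\le k}g(x_i-x_{k+1})$. For a repulsive $g$ this is either nonnegative or can be controlled, so the weight of order $k$ is dominated by the weight of order $k+1$. Second, the power $\lambda^{-\theta}$ comes from converting the velocity dissipation into integrability in $v$. This is where the losses accumulate, and the exponents need careful bookkeeping.

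\textbf{Step 3: iterating the hierarchy.} Writing $a_k(t)=\|g_{N,k}(t)\|_{q,\lambda,k}^q$, I would iterate $a_k(t)\le a_k(0)+kL\int_0^ta_{k+1}$ from $k$ up to $N$, and close at level $N$ using the assumption $a_N\le F^N$. The Duhamel expansion gives
\begin{equation*}
a_k(t)\le\sum_{j=0}^{N-k-1}\binom{k+j-1}{j}(Lt)^jF_0^{k+j}+\binom{N-1}{N-k}(Lt)^{N-k}F^N .
\end{equation*}
I would then choose $T^\ast$ so that $LT^\ast F_0\le1/4$ and $LT^\ast F\le1/8$. With this choice the first sum is at most $(2F_0)^k$, by the binomial series bound $\sum_j\binom{k+j-1}{j}x^j=(1-x)^{-k}\le 2^k$ for $x\le1/2$. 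The second term is bounded by $2^{N}(LT^\ast)^{N-k}F^N\le(4F)^k2^{-N-1}$, after adjusting constants. This gives the stated estimate with $T^\ast$ depending only on $L,F_0,F$.
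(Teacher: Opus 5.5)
Your proposal follows the same route as the paper. The paper takes this statement from \cite{bresch2025new} and carries out the identical computation in the proof of Proposition~\ref{prop:bjs-estimate-whole-space}: conservation of $e_k$ by the Hamiltonian part, then $\frac{\dd}{\dd t}a_k\le kL\,a_{k+1}$ on $[0,1]$ from integrating the coupling term by parts in $v_i$ and applying H\"older in $z_{k+1}$, then exactly your Duhamel iteration (you should state $T^\ast\le 1$, since the level-$N$ bound and the constant $L$ only hold there).

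Two points of bookkeeping in Steps 1--2 are wrong. First, the diffusion cross term $-\frac{2q\lambda}{\beta}\int|g_{N,k}|^{q-2}g_{N,k}\,\nabla_{v_i}g_{N,k}\cdot v_i\,e^{\lambda e_k}$ must be bounded by Cauchy--Schwarz against the dissipation. This leaves only $\frac{2q}{(q-1)\beta}\lambda^2|v_i|^2$, which $\lambda'e_k=-\Lambda\lambda^2e_k$ absorbs once $\Lambda\ge c_0(q,\beta)$. Errors of order $\lambda$ or $\lambda|v|^2$, as you wrote, cannot be absorbed by taking $\Lambda$ large, because $|\lambda'|/\lambda=1/(1+t)$ does not depend on $\Lambda$. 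This is exactly why the whole-space version, where $\mathcal{L}_k e_k=2\sum_i v_i\cdot x_i$ does produce such a term, needs an exponentially decaying $\lambda$. Second, the loss $\lambda^{-\theta}$ does not come from converting dissipation into integrability. It comes from the Young splitting $|g_{N,k}|^{q-2}|A_i|^2\le\frac{q-2}{q}\lambda^2|g_{N,k}|^q+\frac{2}{q}\lambda^{2-q}|A_i|^q$, with $A_i=\int K(x_i-x_{k+1})g_{N,k+1}\,\dd z_{k+1}$, followed by the Gaussian integral in $v_{k+1}$ in the H\"older step.
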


A similar result also holds for $\D=\R^d$ with minor modifications in the assumption for $K$ and the definition of $e_k$ and $\lambda(t)$ and $L$. We state the clear version here and present the proof, but most of the computations are exactly identical to those in \cite{bresch2025new}.

\begin{proposition}[\cite{bresch2025new}, Proposition 5, whole-space version]\label{prop:bjs-estimate-whole-space}
    Let $K=K_s+K_b$ with $K_s=K\mathsf{1}_{B_1} \in L^p(B_1)$ for some $p>1$ and $K_b=K\mathsf{1}_{B_1^c} \in L^\infty(B_1^c)$, where $B_1$ is the unit ball in $\R^d$. We define
    \begin{equation*}
        \lambda(t)=\frac{e^{-t}}{\Lambda(1+B(1-e^{-t}))}, \quad L=\frac{C}{\lambda(1)^\theta} (\|K_s\|_{L^p}^q+\|K_b\|_{L^\infty}^q)
    \end{equation*}
    for some positive constants $\Lambda, C, \theta$ depending only on $q, d, \beta$, and $B=c_0/\Lambda$ for some $c_0$ depending only on $q, \beta$, and provided that $1/q+1/p \leq 1$ with $q>2$. Consider any entropy solution $g_N$ of \eqref{eq:forward-kolmogorov} with initial value $g_N^0 \in L^\infty(\Omega^N)$ and satisfying
    \begin{equation}\label{eq:bjs-condition-whole-space}
        \int_{\Omega^k} |g_{N,k}^0|^q e^{\lambda(0) e_k} \dd Z_k \leq F_0^k, \quad \sup_{t \leq 1} \int_{\Omega^N} |g_N(t)|^q e^{\lambda(t) e_N} \dd Z_N \leq F^N
    \end{equation}
    for some $F, F_0>0$, where
    \begin{equation*}
        e_k(Z_k)=\sum_{i=1}^k (1+|x_i|^2+|v_i|^2) +\frac{1}{N} \sum_{i,j=1}^k g(x_i-x_j), 
    \end{equation*}
where again we use the convention that $g(0)=0$. 
    Then we have
    \begin{equation*}
        \sup_{t \leq T^\ast} \int_{\Omega^k} |g_{N,k}(t)|^q e^{\lambda(t) e_k} \dd Z_k \leq (2F_0)^k+(4F)^k 2^{-N-1}
    \end{equation*}
    for some $T^\ast>0$ depending only on $L, F_0, F$.
\end{proposition}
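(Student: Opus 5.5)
The plan is to reproduce the weighted BBGKY argument of \cite{bresch2025new} on the whole space. The only changes are the confining weight $|x_i|^2$ in $e_k$ and the splitting $K=K_s+K_b$. I would work first with the regularized solutions $g_{N,\varepsilon}$ (force $K_\varepsilon$) and pass to the entropy-solution limit at the end, using weak lower semicontinuity of the weighted $L^q$ functionals. For each $k$ I would define
\[
G_k(t)=\int_{\Omega^k} |g_{N,k}(t)|^q e^{\lambda(t) e_k}\,\dd Z_k ,
\]
and differentiate it in time using the marginal hierarchy. The hierarchy gives transport, the interaction terms inside the first $k$ particles, the coupling term to particle $k+1$ weighted by $(N-k)/N$, and velocity diffusion. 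After integrating by parts, the diffusion yields a dissipation $-c\beta^{-1}\int |\nabla_v (|g_{N,k}|^{q/2})|^2 e^{\lambda e_k}$, minus error terms from $\nabla_v$ hitting the weight. Those errors have size $\lambda|v|$ and $\lambda^2|v|^2$, and they are absorbed by the $\dot\lambda e_k$ term provided $\Lambda$ is large.

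The first point specific to $\R^d$ is the transport term. Since $v\cdot\nabla_x$ acts on $|x_i|^2$, it produces $\lambda\sum_i 2x_i\cdot v_i\, |g|^q e^{\lambda e_k}$, which is bounded by $\lambda\sum_i(|x_i|^2+|v_i|^2)$. This is exactly why $\lambda$ must decay with the factor $e^{-t}$. Computing $\dot\lambda$ shows that $-\dot\lambda \ge \lambda+ c_0\lambda^2/\Lambda$-type terms, so $\dot\lambda e_k$ absorbs both the $|x|,|v|$ cross terms and the diffusion errors. This is where $B=c_0/\Lambda$ is fixed.

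The internal pair force term $\frac1N\sum_{i\ne j\le k}K(x_i-x_j)\cdot\nabla_{v_i}$ cancels exactly against the time derivative of the potential part $\frac1N\sum g(x_i-x_j)$ of $e_k$ along the Hamiltonian flow. That is the role of $g$ in the weight, and it is identical to \cite{bresch2025new}.

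The coupling term with particle $k+1$ is the main work. After integrating by parts in $v_i$, it becomes
\[
\frac{N-k}{N}\int K(x_i-x_{k+1})\, g_{N,k+1}\, |g_{N,k}|^{q-2}\nabla_{v_i} g_{N,k}\, e^{\lambda e_k} ,
\]
plus a weight term $\lambda v_i K\cdots$. I would split $K=K_s+K_b$. The $K_b$ part is bounded in $L^\infty$ and is handled by Cauchy--Schwarz against the dissipation, giving a contribution of order $\|K_b\|_\infty^2 G_k$ and $G_{k+1}$. The $K_s$ part is supported in $|x_i-x_{k+1}|<1$. For it I would apply H\"older with $1/p+1/q\le1$ in $x_{k+1}$, and then pass from $e_k$ to $e_{k+1}$. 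This step is exactly the Proposition 5 argument: the extra $|x_{k+1}|^2$ costs at most $2|x_i|^2+2$ on the support, and $g\ge -C$ is not an issue since $0<s$, while for log one uses the local support. Together these give a differential inequality of the form
\[
\dot G_k \le L\, k\,\big(G_k + G_{k+1}\big)\quad\text{(up to } \lambda\text{-dependent constants)},
\]
which is where $L$ arises, with the $\lambda(1)^{-\theta}$ coming from inverse powers of $\lambda$ in Young's inequality.

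Finally, I would iterate the hierarchy inequality with the closure $G_N\le F^N$ from \eqref{eq:bjs-condition-whole-space}, truncated at depth $N$, and initial data $G_k(0)\le F_0^k$. The resulting series of iterated time integrals is bounded by $\sum_n \binom{k+n}{n}(LT)^n F_0^{k+n}$. For $T^\ast$ small in terms of $L,F_0,F$ this gives $(2F_0)^k$, and the remainder term at level $N$ gives $(4F)^k2^{-N-1}$.

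I expect the main obstacle to be the whole-space bookkeeping when moving from $e_k$ to $e_{k+1}$ for $K_s$ together with the noncompact $x$-moments. One has to check that the $|x|^2$ weight's interaction with transport is fully absorbed by the modified $\lambda$, without spoiling the constant $\Lambda$ needed for the diffusion errors.
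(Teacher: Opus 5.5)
Your outline follows the paper's proof. It uses the computation of \cite[Lemma 9]{bresch2025new} with the new transport term $2\lambda\sum_i x_i\cdot v_i\le\lambda e_k$, the cancellation of the internal forces against the potential part of $e_k$, and Cauchy--Schwarz of the diffusion and coupling terms against the dissipation. It then applies Young and weighted H\"older to the coupling and runs the iteration of \cite[Proposition 5]{bresch2025new}. Note, however, that with $B=c_0/\Lambda$ one has the identity $\lambda'=-\lambda-B\Lambda\lambda^2=-\lambda-c_0\lambda^2$ for every $\Lambda$, so no largeness of $\Lambda$ is involved. Consequently the coefficient of $\int|g_{N,k}|^q e_k e^{\lambda e_k}$ is exactly zero and leaves no slack.

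This is where your treatment of $K_b$ does not close. Bounding $\bigl|\int K_b(x_i-x_{k+1})g_{N,k+1}\,\dd z_{k+1}\bigr|\le\|K_b\|_{L^\infty}g_{N,k}$ produces a term of size $k\|K_b\|_{L^\infty}^2G_k$, which cannot be absorbed through $k\le e_k$. Yet the series $\sum_n\binom{k+n}{n}(LT)^nF_0^{k+n}$ you then use is the one for the pure hierarchy $\dot G_k\le LkG_{k+1}$, not for $\dot G_k\le Lk(G_k+G_{k+1})$. To keep the $G_k$ term you would have to carry Gronwall factors through the iteration, up to $e^{cNT}$ at the deepest level, i.e.\ exactly in the remainder $(4F)^k2^{-N-1}$. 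Your plan does not do this, and it would also produce $\|K_b\|_{L^\infty}^2$ rather than the $\|K_b\|_{L^\infty}^q$ appearing in $L$.

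The paper avoids the issue by not splitting before H\"older. It bounds $\bigl|\int K(x_i-x_{k+1})g_{N,k+1}\,\dd z_{k+1}\bigr|^q$ by $\bigl(\int|K|^{q^\ast}e^{-\frac{q^\ast}{q}\lambda|z_{k+1}|^2}\,\dd z_{k+1}\bigr)^{q/q^\ast}\int|g_{N,k+1}|^qe^{\lambda|z_{k+1}|^2}\,\dd z_{k+1}$, with the Gaussian in both $x_{k+1}$ and $v_{k+1}$. The Gaussian in $x_{k+1}$ makes the bounded but non-integrable tail $K_b$ integrable on $\R^d$, giving $C\lambda^{-\theta}\|K_b\|_{L^\infty}^qG_{k+1}$. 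This is the reason the $|x_i|^2$ part of $e_k$ is there at all; the transport term is the price paid for it. You can also repair your version with $G_k\le C\lambda^{-d(q-1)}G_{k+1}$, which follows from the same H\"older inequality applied to $g_{N,k}=\int g_{N,k+1}\,\dd z_{k+1}$.

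By contrast, the $K_s$ step you flag as the main obstacle is harmless. On the kernel side $e^{-\frac{q^\ast}{q}\lambda|x_{k+1}|^2}\le 1$, and $e_k+|z_{k+1}|^2\le e_{k+1}$ because $g\ge0$, so the bound $|x_{k+1}|^2\le 2|x_i|^2+2$ plays no role.

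Finally, if you run everything on regularized solutions, be aware that the closure bound in \eqref{eq:bjs-condition-whole-space} is assumed for $g_N$. Lower semicontinuity does not transfer it back to $g_{N,\varepsilon}$, so you would need to re-derive it from the coupling-free equation at level $k=N$, as is done in the proof of Theorem \ref{thm:mfl-vpfp}.
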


\begin{proof}[Proof of Proposition \ref{prop:bjs-estimate-whole-space}]
    We start with a computation similar to \cite[Lemma 9]{bresch2025new}. Define
    \begin{equation*}
        \mathcal{L}_k=\sum_{i=1}^k v_i \cdot \nabla_{x_i}+\frac{1}{N} \sum_{i,j=1}^k K(x_i-x_j) \cdot \nabla_{v_i}.
    \end{equation*}
    Applying the BBGKY hierarchy solved by $g_{N,k}$, we obtain
    \begin{align*}
        \frac{\dd}{\dd t} \int |g_{N,k}|^q e^{\lambda(t) e_k}=&\, \int |g_{N,k}|^q \left( \mathcal{L}_k e^{\lambda(t) e_k} \right)+\frac{q}{\beta} \int |g_{N,k}|^{q-1} \, \mathrm{sgn} \, g_{N,k} \bigg( \sum_{i=1}^k \Delta_{v_i} g_{N,k} \bigg) e^{\lambda(t) e_k}\\
        -&\, \frac{N-k}{N} q \sum_{i=1}^k \int |g_{N,k}|^{q-1} \, \mathrm{sgn} \, g_{N,k} \left( \nabla_{v_i} \cdot \int K(x_i-x_{k+1}) g_{N,k+1} \dd z_{k+1} \right) e^{\lambda(t) e_k}\\
        +&\, \lambda'(t) \int |g_{N,k}|^q e_k e^{\lambda(t) e_k}.
    \end{align*}
    Since $\displaystyle \mathcal{L}_k e^{\lambda(t) e_k}=2 \lambda(t) e^{\lambda(t) e_k} \sum_{i=1}^k v_i \cdot x_i$, the first term is bounded by
    \begin{equation*}
        \lambda(t) \int |g_{N,k}|^q e_k e^{\lambda(t) e_k}
    \end{equation*}
    by the Cauchy--Schwarz inequality. The remaining two terms are estimated by repeatedly using integration by parts and the Cauchy--Schwarz inequality in exactly the same way as in \cite[Lemma 9]{bresch2025new}. We arrive at
    \begin{align*}
        \frac{\dd}{\dd t} \int |g_{N,k}|^q e^{\lambda(t) e_k} &\, \leq \left[ \lambda'(t)+\lambda(t)+ \left( \frac{2q}{(q-1)\beta}+q \right) \lambda(t)^2 \right] \int |g_{N,k}|^q e_k e^{\lambda(t) e_k}\\
        +&\, \left( \frac{\beta}{2}q(q-1) +q \right) \frac{N-k}{N} \sum_{i=1}^k \int |g_{N,k}|^{q-2} \left| \int K(x_i-x_{k+1}) g_{N,k+1} \dd z_{k+1} \right|^2 e^{\lambda(t) e_k}.
    \end{align*}
    For simplicity, we denote by $c_1=\frac{2q}{(q-1)\beta}+q$ and $c_2=\frac{\beta}{2}q(q-1)+q$, which both depend only on $q$ and $\beta$. Applying H\"older's inequality to the integral in the second line, we bound the second line by
    \begin{equation*}
        c_2 k \frac{q-2}{q} \lambda(t)^2 \int |g_{N,k}|^q e^{\lambda(t) e_k}+\frac{2c_2}{q \lambda(t)^{q-2}} \frac{N-k}{N} \sum_{i=1}^k \int e^{\lambda(t) e_k} \left| \int K(x_i-x_{k+1}) g_{N,k+1} \dd z_{k+1} \right|^q.
    \end{equation*}
    The first term should be combined with the first line in the previous inequality, and the whole coefficient becomes
    \begin{equation*}
        \lambda'(t)+\lambda(t)+c_0\lambda(t)^2, \quad c_0=c_1+\frac{q-2}{q}c_2.
    \end{equation*}
    By definition of $\lambda(t)$, the coefficient in the first line is non-positive. For the remaining second term, we apply H\"older's inequality again to the integrand and use the assumptions on $K$ to get 
    \begin{align*}
        \left| \int K(x_i-x_{k+1}) g_{N,k+1} \dd z_{k+1} \right|^q \leq &\, \left( \int |K(x_i-x_{k+1})|^{q^\ast} e^{-\frac{q^\ast}{q} \lambda(t) |z_{k+1}|^2} \dd z_{k+1} \right)^{q/q^\ast}  \\
        &\, \qquad \qquad \cdot \int |g_{N,k+1}|^q e^{\lambda(t) |z_{k+1}|^2} \dd z_{k+1}.
    \end{align*}
    In the first factor, we first integrate over $v_{k+1}$, and then split the kernel into $K=K_s+K_b$ and bound the two parts by the $L^{q^\ast}$ norm and the $L^\infty$ norm. Thanks to the Gaussian weight in both $x_{k+1}$ and $v_{k+1}$, the first factor is bounded by
    \begin{equation*}
        C_{q,d} \lambda(t)^{-\theta_{q,d}} \left( \|K_s\|_{L^p}^q+\|K_b\|_{L^\infty}^q \right).
    \end{equation*}
    By simple bookkeeping, we conclude that
    \begin{equation*}
        \frac{\dd}{\dd t} \int |g_{N,k}|^q e^{\lambda(t) e_k} \leq C_{q,d,\beta} \lambda(t)^{-\theta_{q,d}} \frac{k(N-k)}{N} \left( \|K_s\|_{L^p}^q+\|K_b\|_{L^\infty}^q \right) \int |g_{N,k+1}|^q e^{\lambda(t) e_{k+1}}.
    \end{equation*}
    The remaining parts are identical to the proof of \cite[Proposition 5]{bresch2025new}.
\end{proof}

Now we finish the proof of our second main result.

\begin{proof}[Proof of Theorem \ref{thm:mfl-vpfp}]
    We apply the above two propositions to our setting. The integrability condition of $K$ is obviously satisfied. We only need to verify the conditions in \eqref{eq:bjs-condition} and \eqref{eq:bjs-condition-whole-space}. Recall that
    \begin{equation*}
        g_N^0=\QN(f_0)=\frac{1}{\ZN} \exp \left( -\beta N \WN(X_N;\rho_{f_0}) \right) f_0^{\otimes N}(Z_N).
    \end{equation*}
    By H\"older's inequality and Proposition \ref{prop:partition-function}, we have
    \begin{align*}
        \int_{\Omega^k} |g_{N,k}^0|^q e^{\lambda(0)e_k} \dd Z_k &\, \leq \frac{\|f_0\|_{L^\infty}^{(q-1)k}}{\ZN^q} \int_{\Omega^N} \exp \left( -q\beta N \WN(X_N;\rho_{f_0}) \right) f_0^{\otimes N} e^{\Lambda^{-1} e_k} \dd Z_N\\
        &\, \leq C_{\beta, q} \|f_0\|_{L^\infty}^{(q-1)k} \int_{\Omega^N} \exp \left( -q\beta N \WN(X_N;\rho_{f_0}) \right) f_0^{\otimes N} e^{\Lambda^{-1} e_k} \dd Z_N.
    \end{align*}
    Next we need to establish a lower bound for $N\WN(X_N;\rho_{f_0})$. Expand the integral and divide the variables into two groups, we write
    \begin{align*}
        N\WN(X_N;\rho_{f_0})=&\, \frac{1}{2N} \sum_{i \neq j}^N g(x_i-x_j)- \sum_{i=1}^N g \ast \rho_{f_0}(x_i)+\frac{N}{2} \int g \ast \rho_{f_0} \rho_{f_0}\\
        =&\, \frac{k}{N} k\mathcal{W}_k(X_k;\rho_{f_0})+\frac{N-k}{N} (N-k)\mathcal{W}_{N-k}(X_{[k+1,N]};\rho_{f_0})+\frac{1}{N} \sum_{i=1}^k \sum_{j=k+1}^N g(x_i-x_j).
    \end{align*}
    Here $X_{[k+1,N]}=(x_{k+1}, \cdots, x_N)$. The last term is bounded from below by $-Ck$ since the potential $g \geq -C$. Therefore, we have
    \begin{align*}
        \int_{\Omega^k} |g_{N,k}^0|^q e^{\lambda(0)e_k} \dd Z_k \leq C_{\beta, q}^k \|f_0\|_{L^\infty}^{(q-1)k} &\, \int_{\Omega^k} e^{-q\beta \frac{k}{N} k \mathcal{W}_k(X_k;\rho_{f_0})} f_0^{\otimes k} e^{\Lambda^{-1} e_k} \dd Z_k \\
        \cdot &\, \int_{\Omega^{N-k}} e^{-q\beta \frac{N-k}{N} (N-k) \mathcal{W}_{N-k} (X_{[k+1, N]};\rho_{f_0})} f_0^{\otimes (N-k)} \dd Z_{N-k}.
    \end{align*}
    The second integral is exactly $\overline{Z}_{N-k, (N-k)q\beta/N}$, which is bounded by $C_{\beta, q}$ by Proposition \ref{prop:partition-function}. Using again $g \geq -C$ and $\rho_{f_0} \in L^1 \cap L^\infty(\D)$, we see that, once $\Lambda^{-1}<\min(\beta/2, \alpha)$, the first integral is bounded by
    \begin{equation*}
        C_{q,\beta}^k \|f_0\|_{L^\infty}^{(q-1)k}
        \begin{cases}
            \displaystyle \int_{\Omega^k} f_0^{\otimes k} e^{\Lambda^{-1} (k+\sum_{i=1}^k |v_i|^2)} \dd Z_k, \quad &\, \D=\T^d\\
            \displaystyle \int_{\Omega^k} f_0^{\otimes k} e^{\Lambda^{-1} (k+\sum_{i=1}^k |z_i|^2)} \dd Z_k, \quad &\, \D=\R^d
        \end{cases}
        \leq C^k
    \end{equation*}
    for some $C$ depending only on $q, \beta, M_\alpha, \|f_0\|_{L^\infty}, \|\rho_{f_0}\|_{L^\infty}$, since the non-exponentially integrable weight is absorbed by the Gibbs weight. This proves the first condition in \eqref{eq:bjs-condition} and \eqref{eq:bjs-condition-whole-space}. The second condition follows from taking $k=N$ in the first condition and applying \cite[Lemma 9]{bresch2025new}.

    Therefore, based on Proposition \ref{prop:bjs-estimate} and Proposition \ref{prop:bjs-estimate-whole-space}, we get the desired compactness bound on the marginals $g_{N,k}$. The rest of the proof follows identically to that in \cite{bresch2025new} by the standard compactness-uniqueness argument. This, combined with Theorem \ref{thm:one-initial-data}, finishes the proof.
\end{proof}

\subsection*{Acknowledgements}

The authors would like to thank Hao Liang and Xianliang Zhao for helpful discussions on modulated Gibbs measures for classical particle systems. This work was partially supported by the National Key R\&D Program of China (Project No.~2024YFA1015500) and the National Natural Science Foundation of China (NSFC; Grant Nos.~12595282 and 12171009).

\bibliography{ref}

\begin{thebibliography}{10}

\bibitem{bresch2024duality}
D.~Bresch, M.~Duerinckx, and P.-E. Jabin.
\newblock A duality method for mean-field limits with singular interactions.
\newblock {\em arXiv preprint arXiv:2402.04695}, 2024.

\bibitem{bresch2025new}
D.~Bresch, P.-E. Jabin, and J.~Soler.
\newblock A new approach to the mean-field limit of {V}lasov--{F}okker--{P}lanck equations.
\newblock {\em Analysis \& PDE}, 18(4):1037--1064, 2025.

\bibitem{bresch2019modulated}
D.~Bresch, P.-E. Jabin, and Z.~Wang.
\newblock Modulated free energy and mean field limit.
\newblock {\em S{\'e}minaire Laurent Schwartz-EDP et applications}, pages 1--22, 2019.

\bibitem{bresch2019mean}
D.~Bresch, P.-E. Jabin, and Z.~Wang.
\newblock On mean-field limits and quantitative estimates with a large class of singular kernels: Application to the {P}atlak--{K}eller--{S}egel model.
\newblock {\em Comptes Rendus Mathematique}, 357(9):708--720, 2019.

\bibitem{bresch2023mean}
D.~Bresch, P.-E. Jabin, and Z.~Wang.
\newblock Mean field limit and quantitative estimates with singular attractive kernels.
\newblock {\em Duke Mathematical Journal}, 172(13):2591--2641, 2023.

\bibitem{carrillo2019mean}
J.~A. Carrillo, Y.-P. Choi, M.~Hauray, and S.~Salem.
\newblock Mean-field limit for collective behavior models with sharp sensitivity regions.
\newblock {\em Journal of the European Mathematical Society}, 21(1):121--161, 2019.

\bibitem{carrillo2019propagation}
J.~A. Carrillo, Y.-P. Choi, and S.~Salem.
\newblock Propagation of chaos for the {V}lasov--{P}oisson--{F}okker--{P}lanck equation with a polynomial cut-off.
\newblock {\em Communications in Contemporary Mathematics}, 21(04):1850039, 2019.

\bibitem{chen2026mean}
L.~Chen, J.~Jung, P.~Pickl, and Z.~Wang.
\newblock On the mean-field limit of {V}lasov--{P}oisson--{F}okker--{P}lanck equations.
\newblock {\em Journal of Mathematical Physics}, 67(6), 2026.

\bibitem{delgadino2025sharp}
M.~G. Delgadino and R.~S. Gvalani.
\newblock Sharp mean-field estimates for the repulsive log gas in any dimension.
\newblock {\em arXiv preprint arXiv:2506.22083}, 2025.

\bibitem{delgadino2026sharp}
M.~G. Delgadino, R.~S. Gvalani, and M.~Rosenzweig.
\newblock Sharp mean-field estimates for diffusive log/{R}iesz gases in the {H}ilbert--{S}chmidt regime.
\newblock {\em arXiv preprint arXiv:2609.02743}, 2026.

\bibitem{dobrushin1979vlasov}
R.~L. Dobrushin.
\newblock Vlasov equations.
\newblock {\em Functional Analysis and Its Applications}, 13(2):115--123, 1979.

\bibitem{duerinckx2016mean}
M.~Duerinckx.
\newblock Mean-field limits for some {R}iesz interaction gradient flows.
\newblock {\em SIAM Journal on Mathematical Analysis}, 48(3):2269--2300, 2016.

\bibitem{duerinckx2026derivation}
M.~Duerinckx and P.-E. Jabin.
\newblock Derivation of 2{D} {V}lasov--{P}oisson for classical particles with {C}oulomb interactions.
\newblock {\em arXiv preprint arXiv:2608.04104}, 2026.

\bibitem{duerinckx2026singular}
M.~Duerinckx and P.-E. Jabin.
\newblock Singular mean-field limits for fluctuations around equilibrium.
\newblock {\em arXiv preprint arXiv:2605.28979}, 2026.

\bibitem{dupuis1997weak}
P.~Dupuis and R.~S. Ellis.
\newblock {\em A Weak Convergence Approach to the Theory of Large Deviations}.
\newblock Wiley Series in Probability and Statistics. John Wiley \& Sons, New York, 1997.

\bibitem{golse2016dynamics}
F.~Golse.
\newblock On the dynamics of large particle systems in the mean field limit.
\newblock {\em Macroscopic and large scale phenomena: coarse graining, mean field limits and ergodicity}, pages 1--144, 2016.

\bibitem{hauray2007n}
M.~Hauray and P.-E. Jabin.
\newblock ${N}$-particles approximation of the {V}lasov equations with singular potential.
\newblock {\em Archive for Rational Mechanics and Analysis}, 183(3):489--524, 2007.

\bibitem{hauray2015particle}
M.~Hauray and P.-E. Jabin.
\newblock Particle approximation of {V}lasov equations with singular forces: Propagation of chaos.
\newblock {\em Annales scientifiques de l'{\'E}cole Normale Sup{\'e}rieure}, 48(4):891--940, 2015.

\bibitem{hauray2014kac}
M.~Hauray and S.~Mischler.
\newblock On {K}ac's chaos and related problems.
\newblock {\em Journal of Functional Analysis}, 266(10):6055--6157, 2014.

\bibitem{hauray2019propagation}
M.~Hauray and S.~Salem.
\newblock Propagation of chaos for the {V}lasov--{P}oisson--{F}okker--{P}lanck system in 1{D}.
\newblock {\em Kinetic and Related Models}, 12(2), 2019.

\bibitem{huang2020mean}
H.~Huang, J.-G. Liu, and P.~Pickl.
\newblock On the mean-field limit for the {V}lasov--{P}oisson--{F}okker--{P}lanck system.
\newblock {\em Journal of Statistical Physics}, 181(5):1915--1965, 2020.

\bibitem{jabin2014review}
P.-E. Jabin.
\newblock A review of the mean field limits for {V}lasov equations.
\newblock {\em Kinetic and Related Models}, 7(4):661, 2014.

\bibitem{jabin2016mean}
P.-E. Jabin and Z.~Wang.
\newblock Mean field limit and propagation of chaos for {V}lasov systems with bounded forces.
\newblock {\em Journal of Functional Analysis}, 271(12):3588--3627, 2016.

\bibitem{jabin2017mean}
P.-E. Jabin and Z.~Wang.
\newblock Mean field limit for stochastic particle systems.
\newblock {\em Active Particles, Volume 1: Advances in Theory, Models, and Applications}, pages 379--402, 2017.

\bibitem{jabin2018quantitative}
P.-E. Jabin and Z.~Wang.
\newblock Quantitative estimates of propagation of chaos for stochastic systems with ${W}^{-1,\infty}$ kernels.
\newblock {\em Inventiones mathematicae}, 214:523--591, 2018.

\bibitem{kac1956foundations}
M.~Kac.
\newblock Foundations of kinetic theory.
\newblock In {\em Proceedings of the Third Berkeley Symposium on Mathematical Statistics and Probability, Volume 3: Contributions to Astronomy and Physics}, volume~3, pages 171--198. University of California Press, 1956.

\bibitem{khoury2026quantitative}
N.~Khoury and P.-E. Jabin.
\newblock Quantitative estimates for mean-field limits and correlation functions through a duality framework.
\newblock {\em arXiv preprint arXiv:2605.02058}, 2026.

\bibitem{lacker2023hierarchies}
D.~Lacker.
\newblock Hierarchies, entropy, and quantitative propagation of chaos for mean field diffusions.
\newblock {\em Probability and Mathematical Physics}, 4(2):377--432, 2023.

\bibitem{lazarovici2017mean}
D.~Lazarovici and P.~Pickl.
\newblock A mean field limit for the {V}lasov--{P}oisson system.
\newblock {\em Archive for Rational Mechanics and Analysis}, 225(3):1201--1231, 2017.

\bibitem{mckean1967propagation}
H.~P. McKean.
\newblock Propagation of chaos for a class of non-linear parabolic equations.
\newblock {\em Stochastic Differential Equations (Lecture Series in Differential Equations, Session 7, Catholic Univ., 1967)}, pages 41--57, 1967.

\bibitem{rosenzweig2025modulated}
M.~Rosenzweig and S.~Serfaty.
\newblock Modulated logarithmic {S}obolev inequalities and generation of chaos.
\newblock {\em Annales de la Facult{\'e} des sciences de Toulouse: Math{\'e}matiques}, 34(1):107--134, 2025.

\bibitem{serfaty2020mean}
S.~Serfaty.
\newblock Mean field limit for {C}oulomb-type flows.
\newblock {\em Duke Mathematical Journal}, 169(15):2887--2935, 2020.

\bibitem{sznitman1991topics}
A.-S. Sznitman.
\newblock Topics in propagation of chaos.
\newblock {\em Ecole d'Et{\'e} de Probabilit{\'e}s de Saint-Flour XIX—1989}, pages 165--251, 1991.

\bibitem{wang2026uniform}
Z.~Wang and X.~Zhao.
\newblock Uniform partition-function estimates for {C}oulomb modulated energy at all positive temperatures.
\newblock {\em arXiv preprint arXiv:2609.00867}, 2026.

\bibitem{yau1991relative}
H.-T. Yau.
\newblock Relative entropy and hydrodynamics of {G}inzburg--{L}andau models.
\newblock {\em Letters in Mathematical Physics}, 22(1):63--80, 1991.

\end{thebibliography}
\bibliographystyle{abbrv}

\end{document}